\theoremstyle{plain}
\newtheorem{theorem}{Theorem}[section]
\newtheorem{proposition}[theorem]{Proposition}
\newtheorem{lemma}[theorem]{Lemma}   
\newtheorem{corollary}[theorem]{Corollary}
\theoremstyle{definition}
\newtheorem{problem}[theorem]{Problem}
\newtheorem{notation}[theorem]{Notation}
\newtheorem{example}[theorem]{Example}
\newtheorem{definition}[theorem]{Definition}
\newtheorem{remark}[theorem]{Remark}
\newcommand{\enm}[1]{\ensuremath{#1}}          %
\newcommand{\cal}[1]{\mathcal{#1}}
\newcommand{\PP}{\enm{\mathbb{P}}}
\newcommand{\TT}{\enm{\mathbb{T}}}
\newcommand{\Aa}{\enm{\cal{A}}}
\newcommand{\Bb}{\enm{\cal{B}}}
\newcommand{\Ii}{\enm{\cal{I}}}
\newcommand{\Oo}{\enm{\cal{O}}}
\renewcommand{\phi}{\varphi}
\renewcommand{\theta}{\vartheta}
\renewcommand{\epsilon}{\varepsilon}
\DeclareMathOperator{\red}{red}
\DeclareMathOperator{\reg}{reg}
\renewcommand{\to}[1][]{\xrightarrow{\ #1\ }}
\newcommand{\old}[1]{}
\newcommand{\Res}{\mathrm{Res}}
\title[] 
{Terracini loci and a codimension one Alexander-Hirschowitz Theorem} 
\author{E. Ballico, M.C. Brambilla, C. Fontanari} 
\address{Edoardo Ballico, Dipartimento di Matematica, Universit\`a degli Studi di Trento, Via Sommarive 14, 38123 Povo, Trento, Italy} 
\email{edoardo.ballico@unitn.it} 
\address{Maria Chiara Brambilla, Universit\`a Politecnica delle Marche, Via Brecce Bianche, 60131 Ancona, Italy} 
\email{m.c.brambilla@univpm.it} 
\address{Claudio Fontanari, Dipartimento di Matematica, Universit\`a degli Studi di Trento, Via Sommarive 14, 38123 Povo, Trento, Italy} 
\email{claudio.fontanari@unitn.it} 
\subjclass[2010]{14C20, 14H50 } 
\keywords{Terracini locus, Alexander-Hirschowitz Theorem, Veronese varieties, Nodal plane curves, Severi varieties} 
\begin{document} 

\begin{abstract} 
The Terracini locus $\mathbb{T}(n, d; x)$ is the locus 
of all finite subsets $S$ of $ \mathbb{P}^n$ of 
cardinality $x$ such that $\langle S \rangle = \mathbb{P}^n$, 
$h^0(\mathcal{I}_{2S}(d)) > 0$, and $h^1(\mathcal{I}_{2S}(d)) > 0$. 
The celebrated Alexander-Hirschowitz Theorem  classifies the triples $(n,d,x)$ for which $\dim\mathbb{T}(n, d; x)=xn$. 
Here we fully characterize the next step in the case $n=2$, namely, we prove that 
$\mathbb{T}(2,d;x)$ has at least one irreducible component of dimension $2x-1$ if and only if either $(d,x)\in\{(4,4),$$(4,6),$
$(5,6),(5,7),$ $(6,9),(6,10)\}$, or $d\ge 7$, $d\equiv 1,2 \pmod{3}$
  and $x=(d+2)(d+1)/6$. 
\end{abstract} 

\maketitle 

\section{Introduction} 

{Let $\PP^n$ be the projective space over an algebraically closed field of characteristic zero.
The celebrated Alexander-Hirschowitz Theorem (see e.g. \cite{AH, BO, pos, AM}) classifies all linear systems of hypersurfaces of $\PP^n$ which are singular at a given number of general points and do not have the expected dimension. More explicitly:
\begin{theorem}[Alexander-Hirschowitz]\label{AHthm}
Given $s$ points of $\PP^n$ in general position, the linear system of degree $d$ hypersurfaces of $\PP^n$ which are singular at these points has not the expected dimension $$\max\left\{\binom{n+d}n-s(n+1)-1,-1\right\}$$ 
if and only if 
\begin{itemize}
\item
 either $(n,d,x) = (n,2,x)$ with $2 \le x \le n$, 
 \item or $(n,d,x) \in\{ (2,4,5), (3,4,9), (4,3,7),(4,4,14)\}$. 
 \end{itemize}
\end{theorem}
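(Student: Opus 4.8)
The plan is to recast the statement cohomologically and then run the classical Horace method of Alexander and Hirschowitz. Writing $2S$ for the scheme of double points supported at the $x$ general points, the linear system in question is $\PP(H^0(\Ii_{2S}(d)))$, and since by Riemann--Roch $\chi(\Ii_{2S}(d))=\binom{n+d}{n}-x(n+1)$, the system has the expected dimension precisely when $h^0(\Ii_{2S}(d))\cdot h^1(\Ii_{2S}(d))=0$. Thus the theorem asserts that, outside the listed families, general double points impose independent conditions on degree $d$ hypersurfaces whenever there are few enough of them, and fill the system whenever there are enough. First I would reduce, by semicontinuity together with the standard ``adding and removing a point'' monotonicity (if $h^1=0$ for $x$ points then $h^1=0$ for fewer, and if $h^0=0$ for $x$ points then $h^0=0$ for more), to the two critical cardinalities $x_{-}=\lfloor\binom{n+d}{n}/(n+1)\rfloor$ and $x_{+}=\lceil\binom{n+d}{n}/(n+1)\rceil$ nearest to $\binom{n+d}{n}/(n+1)$: it suffices to prove $h^1=0$ at the subabundant value and $h^0=0$ at the superabundant value.

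The core is a double induction on $n$ and $d$ driven by the Castelnuovo residual sequence. Fixing a hyperplane $H\cong\PP^{n-1}$ and specializing part of the double points so that their support lies on $H$, one uses
$$0\to\Ii_{\op{Res}_H(2S)}(d-1)\to\Ii_{2S}(d)\to\Ii_{(2S)\cap H,\,H}(d)\to 0,$$
where the trace $(2S)\cap H$ contributes double points inside $H$ and the residual $\op{Res}_H$ replaces each specialized double point by a simple point in degree $d-1$. Choosing how many points to push onto $H$ so as to balance the two subproblems, both the trace system on $\PP^{n-1}$ in degree $d$ and the residual system on $\PP^{n}$ in degree $d-1$ become non-special by the inductive hypothesis, which forces the middle term to be non-special as well. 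The delicate point is that a naive integer split rarely balances exactly; here I would invoke the \emph{differential Horace method}, specializing double points ``half on, half off'' $H$ through the vertically graded degenerations of fat points, so that the trace and residual counts can be tuned to the required values, while flatness guarantees that the resulting degenerate computation bounds the general one in the right direction.

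What remains, and where the real work lies, is the treatment of the finitely many base cases of small $d$ and $n$ that the induction cannot yet reach, together with the direct verification of the exceptional list. For the quadratic family $(n,2,x)$ with $2\le x\le n$, a quadric singular at $x$ general points is a cone whose vertex contains their span, and a dimension count shows the system is always strictly larger than expected. For each sporadic triple I would exhibit the unexpected hypersurface explicitly---for $(2,4,5)$ the doubled conic through the five points, and analogous quadric- and Veronese-based constructions for $(3,4,9)$, $(4,3,7)$, $(4,4,14)$---and check that it produces an extra section, i.e. $h^1>0$. The main obstacle is precisely the orchestration of the induction around these exceptions: one must verify that every specialization stays away from the defective configurations, so that the inductive step never secretly lands in an exceptional case, and that the auxiliary degree $d-1$ residual problems invoked are themselves non-exceptional. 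Isolating the listed families as the \emph{only} obstructions to the Horace induction is the crux of the whole argument.
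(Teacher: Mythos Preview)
The paper does not prove Theorem~\ref{AHthm}: it is quoted as a classical result with references to \cite{AH, BO, pos, AM}, and the paper uses it only as input for the study of Terracini loci. There is therefore no ``paper's own proof'' to compare against.

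That said, your outline is a faithful high-level summary of the standard Alexander--Hirschowitz strategy as presented in those references (and in the streamlined accounts of Brambilla--Ottaviani and Postinghel): the cohomological reformulation, the reduction via semicontinuity to the two critical cardinalities, the Castelnuovo residual sequence with respect to a hyperplane, and the differential Horace trick to balance trace and residual are exactly the ingredients of the existing proofs. Your description of the exceptional cases is also correct: the doubled conic for $(2,4,5)$, cones of quadrics for the $d=2$ family, and analogous constructions for the remaining sporadic triples. As you yourself note, the genuine difficulty---and what your sketch does not attempt to carry out---is the bookkeeping that ensures every specialization avoids landing in a defective configuration and that the induction actually closes; this is precisely where the cited papers spend most of their length. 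So your proposal is a sound plan for reproducing the literature proof, but it is not something the present paper undertakes.
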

}

{
Linear systems whose dimension is greater than the expected one are called \emph{special}. The speciality of  linear systems of $\PP^n$ is related to the defectivity of higher secant varieties of the Veronese varieties, see for instance \cite{BCCGO} for more details, {and, consequently, is very important for many applications.}}

{A new object of study in this setting is the Terracini locus of a projective variety, see \cite{ballico-chiantini}. 
Since then, it has been
investigated by many authors, see in particular \cite{bbs}, \cite{ GSTT} and the references therein.
Roughly speaking, the Terracini locus parametrizes the set of all points of a projective variety such that the linear systems of hypersurfaces singular at them are special.} 

{In this paper we focus on the case of Veronese embeddings of projective spaces.
}
For any positive integers $x$, 
let $S(\PP^n, x)$ denote the set of all $A \subset \PP^n$ of cardinality $x$, endowed with the Zariski topology. 
Then the \emph{Terracini locus} $\mathbb{T}(n, d; x)$ is the set of all $S \in S(\mathbb{P}^n, x)$ 
such that $$\langle S \rangle = \mathbb{P}^n,\quad
h^0(\mathcal{I}_{2S}(d)) > 0,\quad h^1(\mathcal{I}_{2S}(d)) > 0.$$

Assume $n \ge 2$. By \cite{bb1}, Theorem 1.1, we recall that the Terracini locus 
$\mathbb{T}(n, d; x)$ is empty if $d=2$ and if $(n,d) = (2,3)$. 
On the other hand, if $d \ge 3$ and $(n, d) \ne (2, 3)$, then $\mathbb{T}(n, d; x) \ne \emptyset$ if and only if 
$x \ge n + \lceil d/2 \rceil$. 

{By an obvious parameter count we have $\dim\mathbb{T}(n, d; x) \le xn$.
The Alexander-Hirschowitz Theorem \ref{AHthm} can be rephrased as follows:
\begin{theorem}[Alexander-Hirschowitz]\label{AHthm2}
We have $\dim\mathbb{T}(n, d; x)=xn$ if and only if
\begin{itemize}
\item
 either $(n,d,x) = (n,2,x)$ with $2 \le x \le n$, 
 \item or $(n,d,x) \in\{(2,4,5), (3,4,9), (4,3,7),(4,4,14)\}$. 
 \end{itemize}
\end{theorem}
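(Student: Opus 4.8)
The plan is to deduce Theorem \ref{AHthm2} from the classical form Theorem \ref{AHthm}, by translating the statement about the dimension of the linear system of singular hypersurfaces into the two cohomological inequalities that define $\mathbb{T}(n,d;x)$. First I would observe that $S(\PP^n,x)$ is irreducible of dimension $xn$ and that $\mathbb{T}(n,d;x)$ is a constructible subset of it, cut out by the open condition $\langle S\rangle=\PP^n$ together with the semicontinuity conditions $h^0(\mathcal{I}_{2S}(d))>0$ and $h^1(\mathcal{I}_{2S}(d))>0$; hence $\dim\mathbb{T}(n,d;x)=xn$ if and only if a general $S\in S(\PP^n,x)$ belongs to $\mathbb{T}(n,d;x)$. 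For general $S$ the condition $\langle S\rangle=\PP^n$ holds exactly when $x\ge n+1$, so I may assume this. Since $2S$ is zero-dimensional of degree $x(n+1)$, taking cohomology in the structure sequence of $2S$ twisted by $\mathcal{O}_{\PP^n}(d)$ yields, for every $S$,
\[
h^0(\mathcal{I}_{2S}(d))-h^1(\mathcal{I}_{2S}(d))=\binom{n+d}{n}-x(n+1).
\]
If $(n,d,x)$ is \emph{not} one of the exceptional triples of Theorem \ref{AHthm}, then for general $S$ the linear system of degree $d$ hypersurfaces singular at $S$ has the expected dimension, so $h^0(\mathcal{I}_{2S}(d))=\max\bigl\{\binom{n+d}{n}-x(n+1),\,0\bigr\}$; by the displayed identity at least one of $h^0,h^1$ then vanishes, so a general $S$ does not lie in $\mathbb{T}(n,d;x)$ and $\dim\mathbb{T}(n,d;x)<xn$.

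For the converse I would run through the exceptional triples of Theorem \ref{AHthm} and check that a general $S$ has \emph{both} $h^0(\mathcal{I}_{2S}(d))>0$ and $h^1(\mathcal{I}_{2S}(d))>0$. For $(2,4,5)$, $(3,4,9)$, $(4,3,7)$ and $(4,4,14)$ one has $x\ge n+1$ and $\binom{n+d}{n}-x(n+1)\le 0$, so by the displayed identity it is enough to produce a single nonzero section of $\mathcal{I}_{2S}(d)$, after which $h^1>0$ follows automatically; such a section is classical in each case: the double of the unique conic through five general points of $\PP^2$; the double of the unique quadric through nine general points of $\PP^3$; the double of the unique quadric through fourteen general points of $\PP^4$; and, for seven general points of $\PP^4$, the secant variety of the rational normal quartic through them, which is a cubic hypersurface singular along that curve. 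This exhibits the four triples for which $\dim\mathbb{T}(n,d;x)=xn$. For the quadratic triples $(n,2,x)$ with $2\le x\le n$ one instead has $x\le n$, so a general $S$ spans only a $\PP^{x-1}\subsetneq\PP^n$ and $\mathbb{T}(n,2;x)=\emptyset$; this case carries no content beyond the defectivity of the $x$-th secant variety of the quadratic Veronese variety and is included in Theorem \ref{AHthm2} only to make the correspondence with the classical list of Theorem \ref{AHthm} transparent.

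I expect the crux of the argument to be the simultaneous positivity of $h^0$ and $h^1$ in the genuinely exceptional cases. In the numerically balanced ones---where $\binom{n+d}{n}=x(n+1)$, which occurs for $(2,4,5)$, $(4,3,7)$ and $(4,4,14)$---the \emph{expected} values of both $h^0$ and $h^1$ vanish, so no pure dimension count can detect membership in $\mathbb{T}(n,d;x)$: one genuinely needs the geometry of the exceptional linear system to produce the nonzero section, and only then does the Euler identity deliver $h^1>0$ as well. Once these finitely many verifications are complete, intersecting the arithmetic constraint $x\ge n+1$ with the exceptional list of Theorem \ref{AHthm} gives precisely the triples in the statement.
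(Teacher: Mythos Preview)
Your argument is correct and supplies everything the paper leaves implicit: the paper offers no proof of Theorem \ref{AHthm2} at all, presenting it simply as a direct restatement of Theorem \ref{AHthm} (``can be rephrased as follows''). Your route---constructibility of $\TT(n,d;x)$, reduction to the generic $S$, the Euler identity $h^0-h^1=\binom{n+d}{n}-x(n+1)$, and then the explicit singular hypersurface in each of the four sporadic cases---is exactly the standard way to justify the translation, and the geometric witnesses you give (double conic, double quadrics, secant cubic of the rational normal quartic) are the classical ones.

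You are also right to flag the quadratic clause. The paper lists $(n,2,x)$ with $2\le x\le n$ among the triples with $\dim\TT(n,d;x)=xn$, yet a few lines earlier it recalls from \cite{bb1} that $\TT(n,2;x)=\emptyset$, and in any event no set of $x\le n$ points can span $\PP^n$. So that clause cannot literally hold under the paper's own definition of $\TT$; it is carried over verbatim from the classical list in Theorem \ref{AHthm} for cosmetic parallelism. Your observation that it ``carries no content'' is accurate and identifies a minor infelicity in the paper's phrasing rather than a gap in your proof.
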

}

It seems natural to try and find out which are the triples  of integers 
$(n,d,x)$ for which we have at least one irreducible component 
$V \subseteq \mathbb{T}(n, d; x)$ of fixed dimension $\dim(V)<nx$.

Already in the first case of dimension $xn - 1$ the question above turns out to be widely open. More precisely, we pose the following:

\begin{problem}
Let $n \ge 2$, $d \ge 3$, $(n, d) \ne (2, 3)$, and $x \ge n + \lceil d/2 \rceil$, so that 
$\mathbb{T}(n, d; x) \ne \emptyset$. 
Determine all triples $(n,d,x)$ such that $\mathbb{T}(n,d;x)$ has at least one irreducible component of dimension $nx-1$.
\end{problem}

Indeed, this task seems to be nontrivial even in the case $n=2$. Our main result is the following:

\begin{theorem}\label{finale2} 
Fix integers $x>0$ and $d\ge 4$. The locus $\TT(2,d;x)$ has at least one irreducible component of dimension $2x-1$ if and only if 
\begin{itemize}
\item
either $(d,x)\in\{(4,4),(4,6),(5,6),(5,7),(6,9),(6,10)\}$
\item or  $d\ge 7$, $d\equiv 1,2 \pmod{3}$, and $x=(d+2)(d+1)/6$.
\end{itemize}
Moreover, for $d\ge 7$, such a component is unique with the only exception of $(d,x)=(8,15)$, where there are exactly two components.
\end{theorem}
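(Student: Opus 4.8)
The plan is to translate the statement about components of $\TT(2,d;x)$ of dimension $2x-1$ into a more tractable condition on the postulation of the double points and then to combine an exhaustive analysis in low degree with a uniform argument for $d\ge 7$. Recall that for $S\in S(\PP^2,x)$ with $\langle S\rangle=\PP^2$ we have $h^0(\Ii_{2S}(d))-h^1(\Ii_{2S}(d))=\binom{d+2}{2}-3x$, so a point of $\TT(2,d;x)$ is a configuration where $2S$ fails to impose independent conditions and where moreover the linear system of singular curves is nonempty. A parameter count shows that if $V\subseteq\TT(2,d;x)$ is an irreducible component, then writing $\delta_0=h^0(\Ii_{2S}(d))$ and $\delta_1=h^1(\Ii_{2S}(d))$ for $S$ general in $V$, the dimension of $V$ is governed by the expected codimension of the Severi-type variety of $\delta_1$-nodal curves through $S$; in particular $\dim V=2x-1$ will force $\delta_1=1$ together with a transversality (equivalently, a regularity) statement, namely that the general such $S$ lies on a curve with exactly $x$ nodes at the points of $S$ and that the corresponding Severi variety is smooth of the expected dimension at this curve. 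So the first step is to make this dictionary precise: \textbf{Step 1.} Show that $\TT(2,d;x)$ has a component of dimension $2x-1$ if and only if there is an irreducible component $W$ of a Severi variety $V_{d,g}$ (curves of degree $d$ and geometric genus $g=\binom{d-1}{2}-x$, equivalently with $x$ nodes) such that the general $[C]\in W$ has its $x$ nodes in general position and $h^1(\Ii_{2S}(d))=1$ for $S$ the node set, together with the fact that no component of $\TT(2,d;x)$ of dimension $2x$ exists — the latter being exactly the Alexander-Hirschowitz list from Theorem \ref{AHthm2}.

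\textbf{Step 2 (the numerical range).} Impose $h^0(\Ii_{2S}(d))\ge 1$, i.e. $3x\le\binom{d+2}{2}-1+\delta_1$ with $\delta_1=1$, giving $x\le (d+2)(d+1)/6$; combined with $\langle S\rangle=\PP^2$ and $x\ge 2+\lceil d/2\rceil$ from the nonemptiness criterion, this pins the candidate values of $x$ to a short interval just below $(d+2)(d+1)/6$. When $3\nmid (d+2)(d+1)/2$, the top value is not an integer and one works with $x=\lfloor(d+2)(d+1)/6\rfloor$ or $x=\lfloor(d+2)(d+1)/6\rfloor-1$; when $3\mid\binom{d+2}{2}$ (that is $d\equiv 0\pmod 3$), one has to rule out all candidates, and this is where $d\ge 7$, $d\equiv 0\pmod 3$ produces no component. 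So Step 2 reduces the problem, for each $d$, to finitely many pairs $(d,x)$, and the heart of the matter is to decide, for the critical pair $x=(d+2)(d+1)/6$ when $d\equiv 1,2\pmod 3$ (and for the handful of sporadic pairs with $d\le 6$), whether the Severi variety $V_{d,g}$ with $g$ as above actually has a component whose general member has nodes in general position.

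\textbf{Step 3 (existence and the $h^1=1$ condition).} For the critical value $x=(d+2)(d+1)/6$ one has $h^0(\Ii_{2S}(d))=1+\delta_1$ and the expected genus $g=\binom{d-1}{2}-x=\frac{(d-2)(d-4)}{6}$ or similar; one must exhibit an irreducible plane curve of degree $d$ with exactly $x$ nodes such that (a) the nodes are in general position, and (b) $h^1(\Ii_{2S}(d))=1$ for $S$ the node set, i.e. $h^0(\Ii_{2S}(d))=2$. The natural construction is a degeneration/induction: add points (or nodes) one at a time, using Castelnuovo–Mumford regularity and the Horace method (specialization of points onto a line or conic) to control $h^1(\Ii_{2S}(d))$, exactly as in the proof of Alexander–Hirschowitz, but now stopping one unit short of the AH threshold. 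Irreducibility of the relevant Severi variety — so that the configurations we build really sweep out a single component, and so that we can count that exactly one component (two for $(8,15)$) arises — should follow from Harris' theorem on the irreducibility of Severi varieties, applied with attention to which component(s) of $V_{d,g}$ dominate the space of node configurations under the map $[C]\mapsto\mathrm{Sing}(C)$.

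\textbf{Step 4 (uniqueness and the sporadic cases).} For $d\ge 7$ the uniqueness claim amounts to showing that among the (few) candidate values of $x$ from Step 2, only $x=(d+2)(d+1)/6$ survives, and that over this $x$ exactly one component of the Severi variety has general member with nodes in general position; the exception $(8,15)$ reflects that $g=6$, $d=8$ sits in the known range where $V_{8,6}$ has two such components (related to the two types of plane sextic models / the presence of an extra linear series), and this must be checked by hand. The cases $(4,4),(5,6),(5,7),(6,9),(6,10)$ are checked directly: here $\binom{d+2}{2}$ is small, the Severi varieties are classically understood, and one verifies $\dim\TT=2x-1$ and the $h^1=1$ condition by explicit computation (for $(4,4)$ this is the AH quartic through $5$ points specialized to $4$ points, etc.). I expect the main obstacle to be Step 3, and specifically a sharp control of $h^1(\Ii_{2S}(d))$ — not merely proving it is positive (existence of a component), but proving it equals exactly $1$ for the general point of the component and not more, since $h^1\ge 2$ would push the expected dimension of the Severi variety up and the resulting component of $\TT$ could have dimension $>2x-1$ or fail to dominate correctly; handling this borderline-regularity statement by a delicate Horace-type induction, while simultaneously keeping track of irreducibility to get the exact component count, is the technical core of the argument.
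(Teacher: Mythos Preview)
There are genuine gaps. In Step~1 you assert that $\dim V=2x-1$ forces $h^1(\Ii_{2S}(d))=1$ for general $S\in V$, but no parameter count you give yields this; a priori $\dim V=2x-1$ only tells you (via the \emph{spread} $\eta(K)=x-1$, Remark~\ref{rmk:eta}) that $x-1$ of the points are general, and the paper's argument in Proposition~\ref{qfin} works with arbitrary $\epsilon=h^1\ge 1$. More seriously, the dictionary you propose --- that a codimension-one component corresponds to an irreducible nodal curve with nodes in general position --- is precisely the hard non-existence direction of the theorem, not a preliminary reduction. One must rule out that the general $C\in|\Ii_{2S}(d)|$ is reducible or has multiple components, and this is the content of Proposition~\ref{qfin}: using $\eta(K)=x-1$ to make $x-1$ points general, one does a case analysis (irreducible, reducible reduced, non-reduced) and eliminates each by comparing $\dim A_K$ against the Harris dimension for the relevant Severi strata. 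Your Step~2 also has a gap: the inequality $3x\le\binom{d+2}{2}$ gives only an upper bound on $x$, not a ``short interval''; all values from $2+\lceil d/2\rceil$ up to $\sigma-1$ must still be excluded. The paper handles this by upward propagation (Remark~\ref{000u3} and the Proposition following it): a codimension-one component at some $x<\sigma$ forces one at every larger value up to $\sigma$, so it suffices to exclude the single critical value treated in Proposition~\ref{qfin}.

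Finally, your account of the $(8,15)$ exception is wrong. By Harris' theorem $V_{15,8}$ is irreducible and contributes exactly one component of $\TT(2,8;15)$. The second component is not a second Severi component or a second linear series; it is the family of $15$ points lying on a smooth plane quartic $Q$, where the non-reduced curve $2Q$ lies in $|\Ii_{2S}(8)|$ (Example~\ref{ex-nuovo}). This is exactly the exceptional pair $(a_1,a_2)=(0,4)$ that survives the non-reduced analysis in part~(c) of Proposition~\ref{qfin} and in Lemma~\ref{uniqueness}, and it is the only such survivor for $d\ge 7$.
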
 

{Note that, for $n=2$, in all the cases which are not listed in Theorems \ref{AHthm2} and \ref{finale2}, we have that $\dim(\TT(2,d;x))\le 2x-2$.}

{
For many reasons (see for instance \cite{AC} and \cite{CFM}),
divisors on parameter spaces are very important. Anyway, they should be defined on a projective parameters space, not on a Zariski open subset of it. Indeed divisors should have an intrinsic definition, it is not sufficient to say: \emph{Take the Zariski closure of the divisor D of $U$}. In the set-up of Terracini loci, the natural compactification of $S(X_{\reg},x)$ is the smoothable component of the Hilbert scheme of $x$ points of $X$. If $X$ is a smooth surface, then this is the full Hilbert scheme of $x$ points of $X$
(see \cite{f}).
}

{Finally, we would like to point out that  one of the main tools we used in order to prove our result is the {\em spread} $\eta$.} %\blu{lo chiamiamo cosi? oppure no?} }
More precisely, for any locally closed irreducible set $K\subseteq S(\PP^n,x)$ let $\eta(K)$ be the maximal integer $y$ such that for a  general  $S'\in S(\PP^n,y)$ there exists $S\in K$ containing $S'$ (see Notation \ref{eta} for the precise definition). 
We think that it is interesting and useful to study the spread $\eta(K)$ for the irreducible components $K$ of $\TT(n,d;x)$.
In  Section \ref{sec eta} we  start this investigation, while collecting the ingredients for the proof of Theorem \ref{finale2}.

We work over an algebraically closed field $\mathbb{K}$ of characteristic $0$. 
\medskip

{We would like to thank the referees for several useful comments that helped us to significantly improve our manuscript.}

\subsection*{Acknowledgements}
All authors are members of GNSAGA of INdAM.

E.~Ballico has been partially funded by
by the European Union Next Generation EU, M4C1, CUP B53D23009560006, PRIN 2022- 2022ZRRL4C - Multilinear Algebraic Geometry.
M.~C.~Brambilla has been partially funded by the 
European Union Next Generation EU, M4C1, CUP E53C24002320006 - 
2022NBN7TL - Applied Algebraic Geometry of Tensors.
Views and opinions expressed are however those of the authors only and do not necessarily reflect those of the European Union or European Commission. Neither the European Union nor the granting authority can be held responsible for them.

\section{Preliminaries}
If $X$ is a reducible projective variety, then we denote by $\dim(X)$ the dimension of a maximal irreducible component of $X$.

{Given a scheme $Z\subset \PP^n$, we denote by $|\Ii_Z(d)|:=\PP(H^0(\PP^n,\Ii_Z\otimes\Oo(d)))$ the linear system of hypersurfaces of degree $d$ containing $Z$.}

\begin{remark}\label{0u1}
Take $S\in S(\PP^n,x)$, $n\ge 2$, and assume $(n+1)x\ge \binom{n+d}{n}$. 
{Taking the cohomology of the exact sequence
$$0\to \Ii_{2S}(d)\to \Oo_{\PP^n}(d) \to \Oo_{2S}(d)\to 0$$
we 
} have that
$S\in \TT(n,d;x)$ if and only if $\langle S\rangle =\PP^n$ and $h^0(\Ii_{2S}(d))>0$. 
\end{remark}

\begin{remark}\label{000u3}
Fix positive integers $c$ and $x$ such that $(n+1)(x+1)\le \binom{n+d}{n}$. Assume the existence of an irreducible family $K\subseteq \TT(n,d;x)$ such that $\dim K =nx-c$.
The set of all $S\cup \{p\}$, $S\in K$, $p\in \PP^n\setminus S$, is an irreducible family $F\subseteq \TT(n,d;x+1)$ of dimension $n(x+1)-c$. 
\end{remark}

\begin{proposition} \label{prop sigma}
Fix integers $n\ge 2$ and $d\ge 3$ such that $(n,d)\notin \{(2,4),(4,3),$ $(4,4)\}$. Set \begin{equation}\label{sigma}\sigma:= \left\lfloor \frac{1}{n+1}\binom{n+d}{n}\right\rfloor.\end{equation}
Fix an integer $y<\sigma$ and assume $\dim \TT(n,d;y)\ge ny-1$. Then $\dim \TT(n,d;x) =nx-1$ for all $y\le x\le \sigma$.
\end{proposition}

\begin{proof} 
By the Alexander-Hirschowitz Theorem \ref{AHthm}, $\dim \TT(n,d;x)<nx$ for all $x\le \sigma$. Thus it is sufficient to prove that $\dim \TT(n,d;x)\ge nx-1$. 
{We prove it by induction on $x\ge y$.
If $x=y$ we know that $\dim \TT(n,d;y)\ge ny-1$
by hypothesis. 
Assume that $\dim \TT(n,d;x-1)\ge n(x-1)-1$, then by Remark \ref{000u3}, with $c=1$, we conclude.}
\end{proof}

\begin{notation}\label{not}
For any positive integer $x$, let $\PP^n[x]$ denote the set of all $(p_1,\dots ,p_x)\in (\PP^n)^x$ such that $p_i\ne p_j$ for all $i\ne j$.
Let $$u_x: \PP^n[x] \to S(\PP^n,x)$$ denote the map $(p_1,\dots ,p_x)\mapsto \{p_1,\dots ,p_x\}$. 
For every $y$ with $1\le y\le x$, let
$$\eta_y: \PP^n[x]\to  \PP^n[y]$$
%(\PP^n)^y$$ 
denote the projection onto the first $y$ factors of $(\PP^n)^x$ .
\end{notation}
Observe that $u_x$ is a finite and unramified map with fibers of cardinality $x!$. Thus for any locally closed irreducible set $K\subseteq S(\PP^n,x)$ the
set $u_x^{-1}(K)$ has pure dimension $\dim (K)$. 

\begin{notation}\label{notation rho}
Given integers $n\ge 2$ and  $d\ge 3$ we set $$\rho(n,d):= \left\lceil \frac{1}{n+1}\binom{n+d}{n}\right\rceil.$$ 
\end{notation}

The following proposition gives a first easy bound on the dimension of the Terracini locus when the number of points is high.
\begin{proposition}\label{super-abundant}
Fix integers $n\ge 2$, $d\ge 3$ and $c>0$ such that $(n,d)\ne (2,3)$. 
Set $\zeta=\rho(n,d)$ %as in Notation \ref{notation rho}
if $(n,d)\notin \{(2,4),(3,4),(4,3),(4,4)\}$
and $\zeta=\rho(n,d) +1$ otherwise.
Set $x_0:= c \zeta$ and fix an integer $x\ge x_0$. Then we have $\dim \TT(n,d;x)\le nx-c$. 
\end{proposition}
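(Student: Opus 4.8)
The plan is to reduce to the boundary case $x = x_0 = c\rho$ and then induct downward on $c$, using Remark \ref{000u3} to pass back up to arbitrary $x \ge x_0$. First observe that once we know $\dim\TT(n,d;x_0) \le nx_0 - c$, the same bound propagates to every $x \ge x_0$: a component of $\TT(n,d;x)$ of dimension $nx - c + 1$ would, by restricting to any of the $\binom{x}{x_0}$ sub-loci obtained by forgetting $x - x_0$ of the points (equivalently, by reversing the construction in Remark \ref{000u3}), produce a family in $\TT(n,d;x_0)$ of dimension at least $nx_0 - c + 1$, a contradiction. So it suffices to bound $\dim\TT(n,d;x_0)$.

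For the boundary case, I would argue by induction on $c \ge 1$. The base case $c = 1$, $x_0 = \rho$: by Notation \ref{notation rho}, $\rho = \lceil \frac{1}{n+1}\binom{n+d}{n}\rceil$ (plus one in the exceptional Alexander–Hirschowitz cases), so $(n+1)\rho \ge \binom{n+d}{n}$ and hence $2\rho$-fat-point conditions are super-abundant; Remark \ref{0u1} applies and Theorem \ref{AHthm2} tells us $\dim\TT(n,d;\rho) < n\rho$, i.e. $\le n\rho - 1$, precisely because the extra $+1$ in the exceptional cases pushes $\rho$ past the four sporadic triples. For the inductive step, suppose $\dim\TT(n,d;(c-1)\rho) \le n(c-1)\rho - (c-1)$. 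Take $S \in \TT(n,d;c\rho)$ general in a component $V$ of maximal dimension. Split $S = S' \sqcup S''$ with $|S'| = (c-1)\rho$ and $|S''| = \rho$. The key geometric point is that $h^1(\Ii_{2S}(d)) > 0$ forces a constraint: either the "first piece" $S'$ already sits in $\TT(n,d;(c-1)\rho)$ after possibly throwing away the collinear-type degeneracies, or the residual scheme argument (Castelnuovo-type exact sequence, or a differential-Horace count) shows that the number of independent conditions lost is at least one per block, yielding $\dim V \le \dim(\text{first block locus}) + \dim(\text{second block locus}) \le [n(c-1)\rho - (c-1)] + [n\rho - 1] = nc\rho - c$.

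The main obstacle is making that last inequality rigorous: the naive "product of Terracini loci" bound is not literally correct because the failure of $h^1 = 0$ for the union need not come from failure on each block separately — a union of two schemes each imposing independent conditions can still fail to impose independent conditions jointly. I expect one needs the spread/incidence-variety formalism (the maps $u_x$ and $\eta_y$ of Notation \ref{not}, with $u_x$ finite unramified of degree $x!$) together with a semicontinuity argument: stratify $V$ by the Hilbert function of $2S'$, and on each stratum use that the fibre of $\eta_{(c-1)\rho}$ over a point of the image is cut out by the condition that the $\rho$ remaining fat points fail to impose independent conditions on the (already possibly-special) residual linear system — a condition of codimension $\ge 1$ in $(\PP^n)^\rho$ by the $c=1$ analysis applied to that residual system. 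Controlling the dimension of the image of $\eta_{(c-1)\rho}$ via the inductive hypothesis, and adding the generic fibre dimension, then gives the bound. I would also keep the hypothesis $(n,d)\neq(2,3)$ in play exactly where it is needed, namely to guarantee $\TT(n,d;x)$ is nonempty in the relevant range (so that the induction is not vacuous in a misleading way) via the emptiness criterion recalled from \cite{bb1}.
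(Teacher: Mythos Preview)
Your argument has a genuine gap, and it stems from tracking the wrong cohomology group. You frame the inductive step around whether each block $S'$, $S''$ lies in a Terracini locus, and then correctly worry that $h^1(\Ii_{2S}(d))>0$ need not force $h^1>0$ on the pieces. But the paper never needs $h^1>0$ on the blocks: it uses only $h^0$. By definition every $S\in\TT(n,d;x)$ satisfies $h^0(\Ii_{2S}(d))>0$, and for \emph{any} subset $A\subset S$ one trivially has $h^0(\Ii_{2A}(d))\ge h^0(\Ii_{2S}(d))>0$. Since $\rho$ is chosen (via Notation~\ref{notation rho}, with the $+1$ in the four exceptional cases) so that Alexander--Hirschowitz gives $h^0(\Ii_{2A}(d))=0$ for a \emph{general} $A\in S(\PP^n,\rho)$, every $\rho$-element subset of $S$ lies in a fixed proper closed subset of $S(\PP^n,\rho)$.

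With that observation the proof becomes a one-liner, with no induction on $c$ and no reduction to $x=x_0$. Partition the coordinates of $(\PP^n)^x$ into $c$ disjoint blocks of size $\rho$ and a leftover of size $x-x_0$. For each block, the projection of $u_x^{-1}(K)$ lands inside the closed locus $\{h^0(\Ii_{2A}(d))>0\}\subset(\PP^n)^\rho$, which has dimension $\le n\rho-1$. The identity map $(\PP^n)^x\to((\PP^n)^\rho)^c\times(\PP^n)^{x-x_0}$ then embeds $u_x^{-1}(K)$ into a variety of dimension at most $c(n\rho-1)+n(x-x_0)=nx-c$, and since $u_x$ is finite this bounds $\dim K$. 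Your proposed stratification by the Hilbert function of $2S'$ and the residual-system argument are unnecessary; the ``main obstacle'' you identify dissolves once you switch from $h^1$ to $h^0$.
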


\begin{proof} 
Let $K$ be an irreducible component of $\TT(n,d;x)$. By the Alexander-Hirschowitz Theorem \ref{AHthm}, we have $h^0(\Ii_{2A}(d)) =0$ for a general 
$A\in S(\PP^n,\zeta)$. Hence 
$\dim \eta_{\zeta}(u_x^{-1}(K)) \le n\zeta -1$.
Fix a general $S\in K$ and label the $x$ points of $S$ as $p_{i,j}$, $1\le i\le \zeta$, $1\le j{\le c}$, 
and call $q_\alpha$ the other points (if $x>x_0$).
Varying $S$ in $K$ for each fixed $j$ each set $\{p_{i,j}, 1\le i\le \zeta\}$ depends on at most $n\zeta-1$ parameters. The set of all $q_\alpha$'s depends on at most
$n(x-x_0)$ parameters. Thus $\dim K\le {c(n\zeta-1)+n(x-x_0)}= nx-c$.
\end{proof}

\section{The general case}

\label{sec eta}
\begin{definition}\label{eta} 
For any locally closed irreducible {non-empty} set $K\subseteq S(\PP^n,x)$, let $\eta(K)$ be the maximal integer $y$ such that $\eta_y(u_x^{-1}(K))$ contains a non-empty open subset of $\PP^n[y]$.
\end{definition}

\begin{remark}\label{rmk1:eta} 
The integer $\eta(K)$, {which we will call {\it spread of $K$},}
is the {maximal} integer $y$ such that for general $S_1\in S(\PP^n,y)$
there is $S\in K$ containing $S_1$. {Hence}
for any general $S_2\in S(\PP^n,y+1)$  there is no $S\in K$ containing $S_2$. Equivalently $\eta(K)$ is the maximal integer such that the map $\eta_y$ is dominant on 
$u_x^{-1}(K)$.
\end{remark}

\begin{remark}\label{rmk:eta}
Take an irreducible family $K\subseteq \TT(n,d;x)$ and set $\eta:=\eta(K)$.  
By definition of $\eta(K)$ we have 
\begin{equation}\label{eqalpha1} 
n\eta \le \dim K \le  n\eta +(n-1)(x-\eta){=(n-1)x+\eta}.
\end{equation} 
Indeed, by Remark \ref{rmk1:eta}, we can choose $\eta$ general points in $K$, but we cannot generically choose  the other $x-\eta$ points.

Let $K$ be an irreducible component of $\TT(n,d;x)$.
If $y=\dim K-(n-1)x>0$,
then by \eqref{eqalpha1} we have $\eta(K) \ge y$. 

Therefore, if $\dim(K)=nx-1$, we have that the spread $\eta(K)=x-1$ . 
\end{remark}

\begin{theorem}\label{alpha2} 
Take $n\ge 2$ and $d\ge 3$, such that  $(n,d)\notin \{(2,4),(3,4)$,$(4,3),(4,4)\}$ and
set %$\rho=\rho(n,d)$ as in Notation \ref{notation rho} and 
$x\ge \rho(n,d)$, as in Notation \ref{notation rho}.
 Then $\TT(n,d;x)$ has no irreducible component $K$ with $\eta(K)\ge x$. 
\end{theorem} 

\begin{proof} 
By \eqref{eqalpha1} we have
$$\dim(\TT(n,d;x))\ge\dim(K)\ge n\eta(K).$$

On the other hand, by Proposition \ref{super-abundant}, with $c=1$, we have
$$\dim(\TT(n,d;x))\le nx-1.$$

Hence we conclude that {the spread satisfies} $\eta(K)\le x-1.$
\end{proof} 

\begin{proposition}\label{bo1}
Take $(n,d,x)\in \{(2,4,6),(3,4,10),(4,4,15)\}$. Then %$\dim \TT(n,d;x) =nx-1$ and 
there is a unique irreducible component of $\TT(n,d;x)$ of dimension $nx-1$.
\end{proposition}

\begin{proof}
Since $x>\rho(n,d)$, by the Alexander-Hirschowitz Theorem \ref{AHthm} we have that $h^0(\Ii_{2A}(d)) =0$ for a general $A\in S(\PP^n,x)$. 
Hence $\dim \TT(n,d;x) \le nx-1$.
Take a general $B\in S(\PP^n,x-1)$. Then we have $|\Ii_{2B}(d)|=\{2T_B\}$, where $T_B$ is the unique smooth quadric containing  $B$. Hence, varying
$B$, the set of all $B\cup \{p\}$ with $p\in T_B\setminus B$ forms an irreducible family $K$ of $\TT(n,d;x)$ of dimension $nx-1$. Obviously, $\overline{K}$ is the unique irreducible family of $\TT(n,d;x)$ with spread $x-1$. Hence $\overline{K}$ is the  unique irreducible component of $\TT(n,d;x)$ of dimension $nx-1$.
\end{proof}

\begin{remark}\label{rem-deg-4}
Adapting the proof of the previous proposition, it is easy to prove that if $(n,d,x)\in \{(2,4,6),(3,4,10),(4,4,15)\}$, we have $\dim \TT(n,d;y) \le ny-2$ for any $y\ge x+1$.
\end{remark}

\begin{proposition}\label{bo2}
Take $(n,d,x)=(4,3,8)$. Then $\dim \TT(n,d;x) \le nx-2$.
\end{proposition}

\begin{proof}
%Since $(4,d,x)$ is not an exception of the Alexander-Hirschowitz Theorem \ref{AHthm} 
As above since $8=x>\rho(4,3)=7$, we have $h^0(\Ii_{2A}(3)) =0$ for a general $A\in S(\PP^4,8)$,
hence $\dim \TT(n,d,x) \le nx-1$.
Take a general $B\in S(\PP^4,7)$. Let $C_B\subset \PP^4$ be the unique rational normal curve containing $B$.
By the Alexander-Hirschowitz Theorem \ref{AHthm} and \cite{BO}, \S 3, we have 
$|\Ii_{2B}(d)|=\{\sigma_2(C_B)\}$, where $\sigma_2(C_B)$ is the secant variety to the curve $C_B$.  The cubic hypersurface $\sigma_2(C_B)$ has two orbits for the action of $\mathrm{Aut}(\PP^1)$ and the singular locus is $\mathrm{Sing}(\sigma_2(C_B))=C_B$ (see the determinantal equation in \cite{BO}, \S 3). 
Hence the irreducible family $K$ of maximal dimension contained in $\TT(4,d;x)$ is given by the set of all $B\cup \{p\}$ with $p\in C_B\setminus B$, varying $B$, and clearly $\dim(\overline{K})\le nx-2$.
\end{proof}

\begin{theorem}\label{000u2} 
Given integers $n\ge 2$ and $d\ge 3$,
set $\rho=\rho(n,d)$ as in Notation \ref{notation rho} and $x\ge \rho+1$.
Assume $(n,d,x)\notin \{(2,4,6),(3,4,10),(4,4,15)\}$.
Then we have $\dim \TT(n,d;x)\le nx-2$.
\end{theorem}
\begin{proof}  
Assume the existence of an irreducible component $K$ of $\TT(n,d;x)$ such that $\dim K\ge nx-1$.
Hence $\eta(K) \ge x-1$ {by Remark \ref{rmk:eta}}. Let $K_1\subseteq S({\PP^n},x-1)$ be the subset of $S({\PP^n},x-1)$ formed by all subsets of cardinality
$x-1$ contained in an element of $K$. Since $\eta(K) \ge x-1$, there is an irreducible component $K_2$ of $K_1$ with $\eta(K_2)=x-1$, i.e. $K_1$ contains an open subset of $S({\PP^n},x-1)$.

If $(n,d,x)=(4,3,8)$ then 
 the conclusion follows from Proposition \ref{bo2}.
Otherwise,
since $(n,d,x-1)$ is not an exception of the Alexander-Hirschowitz Theorem \ref{AHthm}, we have
that $h^0(\Ii_{2S_1}(d)) = 0$ for a general $S_1\in K_2$. 
By construction there exists $S\in K$ such that 
$S\supset S_1$. Hence we have $h^0(\Ii_{2S}(d)) =0$, contradicting the definition of Terracini set.  
\end{proof}

The next result uses our assumption that each element of $\TT(n,d;x)$ spans $\PP^n$. 

\begin{lemma} 
Let $K$ be an irreducible component of $\TT(n,d;x)$. Then $\eta(K)\ge n+1$.
{Moreover,
$\eta(K)\ge n+2$ if and only if there is $S\in K$ containing $n+2$ points in linear general position.}
\end{lemma}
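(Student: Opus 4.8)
The plan is to prove both assertions using the hypothesis that every $S\in K$ spans $\PP^n$, exploiting the map $\eta_y$ from Notation \ref{not}. For the first assertion, fix an irreducible component $K$ of $\TT(n,d;x)$ and a general $S\in K$. Since $\langle S\rangle =\PP^n$, the set $S$ contains a subset of $n+1$ points in linear general position (a projective frame). I would argue that, as $S$ varies in $K$, one can move these $n+1$ points freely: given a general $(p_0,\dots,p_n)\in(\PP^n)^{n+1}$ in linear general position, apply a projective transformation $g\in\mathrm{PGL}_{n+1}$ carrying the distinguished frame of a general $S\in K$ to $(p_0,\dots,p_n)$; since $\TT(n,d;x)$ is $\mathrm{PGL}_{n+1}$-invariant (the conditions $\langle S\rangle=\PP^n$, $h^0(\Ii_{2S}(d))>0$, $h^1(\Ii_{2S}(d))>0$ are projectively invariant) and $K$ is a component, $g\cdot S$ lies again in $K$ — possibly in a different component, but all components are $\mathrm{PGL}_{n+1}$-translates of one another and have the same dimension, so after relabeling we may take $g\cdot S\in K$. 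Hence $\eta_{n+1}(u_x^{-1}(K))$ is dominant onto the locus of $(n+1)$-tuples in linear general position, which is open and dense in $(\PP^n)^{n+1}$, giving $\eta(K)\ge n+1$.

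For the second assertion, the forward direction is immediate from Remark \ref{rmk1:eta}: if $\eta(K)\ge n+2$, then for a general $(p_0,\dots,p_{n+1})\in(\PP^n)^{n+2}$ there is $S\in K$ containing these points, and a general such $(n+2)$-tuple is in linear general position. For the converse, suppose some $S_0\in K$ contains a subset $T=\{p_0,\dots,p_{n+1}\}$ of $n+2$ points in linear general position. I would run the same $\mathrm{PGL}_{n+1}$-translation argument but now also use the residual freedom: after normalizing $p_0,\dots,p_n$ to the standard frame via $\mathrm{PGL}_{n+1}$, the last point $p_{n+1}$ still has $n$ moduli (its coordinates in the frame, up to the torus action already absorbed — more precisely, the stabilizer of $n+1$ general points in $\mathrm{PGL}_{n+1}$ is the maximal torus of dimension $n$, which acts on the position of $p_{n+1}$ with a dense orbit among points in general position with respect to the frame). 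Thus the $\mathrm{PGL}_{n+1}$-orbit of $S_0$ already dominates the $(n+2)$-tuples in linear general position, so $\eta(K)\ge n+2$.

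The main obstacle I anticipate is the bookkeeping around $\mathrm{PGL}_{n+1}$-invariance versus irreducibility: $\TT(n,d;x)$ may have several components permuted by $\mathrm{PGL}_{n+1}$, so I need to be careful that translating a general member of $K$ stays inside $K$, or at least inside a component of the same dimension whose general member also contains a general frame — which is enough to conclude $\eta(K)\ge n+1$, since $\eta$ depends only on the dimension-theoretic/genericity data and all these components are isomorphic via the group action. A clean way to sidestep this is to observe that the incidence variety $\{(S,g)\in K\times\mathrm{PGL}_{n+1}\}\to S(\PP^n,x)$, $(S,g)\mapsto g\cdot S$, has image contained in $\TT(n,d;x)$ and of dimension $\dim K+\dim\mathrm{PGL}_{n+1}-(\text{generic stabilizer})$; its image is irreducible, hence contained in a single component, which by a dimension comparison must be (a component of the same dimension as) $K$ itself, and this image visibly contains all $x$-point sets obtained by moving a frame generically. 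The second, finer point — that the torus stabilizer of a frame moves $p_{n+1}$ with a dense orbit among general positions — is a standard fact about $\mathrm{PGL}_{n+1}$ acting on $n+2$ points, and I would just cite it rather than reprove it.
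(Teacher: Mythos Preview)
Your approach is essentially the same as the paper's, but you make it harder than necessary at the one point where it matters. The paper dispatches the invariance of $K$ in one line: since $\mathrm{Aut}(\PP^n)$ is \emph{irreducible} and $\TT(n,d;x)$ is invariant, each irreducible component $K$ satisfies $h(K)=K$ for every $h\in\mathrm{Aut}(\PP^n)$. This is the standard fact that a connected group acting on a scheme cannot permute irreducible components nontrivially. Once you have $h(K)=K$, both assertions follow immediately from transitivity of $\mathrm{PGL}_{n+1}$ on linearly independent $(n+1)$-tuples and on $(n+2)$-tuples in linear general position.

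Your hedging (``possibly in a different component, but all components are $\mathrm{PGL}_{n+1}$-translates of one another'') is both unnecessary and not quite right: the components are not translates of each other, they are each individually invariant. Your incidence-variety rescue does work --- the image of $K\times\mathrm{PGL}_{n+1}\to S(\PP^n,x)$ is irreducible, contains $K$ via the identity, and lies in $\TT(n,d;x)$, hence equals $K$ --- but note that this argument already uses the irreducibility of $\mathrm{PGL}_{n+1}$, so you might as well invoke the one-line fact directly. Also drop the dimension comment about the generic stabilizer; it plays no role. For the $(n+2)$-part, your torus argument is correct but is just a proof of the transitivity fact the paper cites; either way is fine.
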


\begin{proof}
We have $h(\TT(n,d;x))=\TT(n,d;x)$ for all $h\in \mathrm{Aut}(\PP^n)$. Since $\mathrm{Aut}(\PP^n)$ is irreducible, $h(K)=K$ for all $h\in \mathrm{Aut}(\PP^n)$.
Since each $S\in K$ spans $\PP^n$, then $S$ contains $S'\in S(\PP^n,n+1)$ such that $\langle S'\rangle =\PP^n$. Since all {elements of} $\mathrm{Aut}(\PP^n)$ act transitively
on the open subset of $S(\PP^n,n+1)$ formed by linearly independent points, we conclude that $\eta(K)\ge n+1$.

Recall that $n+2$ general points of $\PP^n$ are in linear general position, {if} any {subset of} $n+1$ of these points spans $\PP^n$. The group $\mathrm{Aut}(\PP^n)$ acts transitively on the subset of $S(\PP^n,n+2)$ formed by points in linear general position. Thus {arguing as above,} we can prove  that $\eta(K)\ge n+2$ if and only if there is $S\in K$ containing $S''\in S(\PP^n,n+2)$ in linear general position.
\end{proof}

\begin{theorem}
\label{THM 39}
Take $n\ge 2$, $d\ge 3$ and $x$ such that 
$x\ge n+\lceil d/2\rceil$ and $(n+1)x\le \binom{n+d}{n}$. Then there is an irreducible family $K$
of $\TT(n,d;x)$ such that $\eta(K) = x-\lceil d/2\rceil+1$ and 
{$\dim K = \lceil d/2\rceil +n-1+n(x-\lceil d/2\rceil)$.}
\end{theorem}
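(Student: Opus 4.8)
The idea is to produce the family $K$ explicitly as (the closure of) an orbit-type construction built around a rational normal curve of degree $d$ in a line, together with general points. The key observation is the classical fact that a length-$\lceil d/2\rceil$ scheme on a line $L \subset \PP^n$ imposes a condition on $|\Oo_L(d)|$ that is smaller than expected by exactly $1$ once we take the $2$-fattening: if $S_0 \subset L$ has $\lceil d/2\rceil$ points, then $2S_0 \cap L$ has length $2\lceil d/2\rceil \ge d+1$, so $h^1(L, \Ii_{2S_0\cap L, L}(d)) > 0$; pushing this up to $\PP^n$ via the residual exact sequence
\begin{equation}\label{res-seq}
0 \to \Ii_{2(S_0\cup S_1)}(d-1) \to \Ii_{2(S_0\cup S_1)}(d) \to \Ii_{(2S_0\cap L)\cup(S_1\cap L)\, ,\, L}(d) \to 0
\end{equation}
and choosing the remaining $x - \lceil d/2\rceil$ points $S_1$ general, one arranges that the $h^1$ survives while $h^0(\Ii_{2S}(d)) > 0$ is forced by the numerical hypothesis $(n+1)x \le \binom{n+d}{n}$ (via Remark \ref{0u1}, or directly).

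First I would fix a line $L$, put $\lceil d/2\rceil$ general points of $L$ into $S_0$, and choose $S_1 = \{q_1,\dots,q_{x-\lceil d/2\rceil}\}$ general in $\PP^n$; set $S = S_0 \cup S_1$ and let $K$ be the irreducible component of $\TT(n,d;x)$ (or simply the irreducible family traced out by letting $L$, $S_0 \subset L$ and $S_1$ vary) containing such an $S$. The span condition $\langle S\rangle = \PP^n$ holds because $x \ge n + \lceil d/2\rceil$ and the $q_i$ are general. For the cohomology: $h^1(\Ii_{2S}(d)) > 0$ follows from \eqref{res-seq} once I check that the first term $h^0(\Ii_{2S}(d-1))$ and the image of the middle $H^0$ in the restricted system on $L$ are controlled — concretely, that the $q_i$ being general does not kill the $h^1$ coming from $L$, which is the content of a standard Horace-type count. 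That $h^0(\Ii_{2S}(d)) > 0$ is immediate when $(n+1)x < \binom{n+d}{n}$ by dimension count, and in the boundary case $(n+1)x = \binom{n+d}{n}$ it follows from $h^1 > 0$ together with the exact sequence $0\to \Ii_{2S}(d)\to \Oo(d)\to \Oo_{2S}(d)\to 0$, exactly as in Remark \ref{0u1}.

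Next I would compute $\dim K$ and $\eta(K)$. By construction the data parametrizing $K$ are: the line $L$ (dimension $2(n-1)$), the $\lceil d/2\rceil$ points on $L$ (dimension $\lceil d/2\rceil$), and the $x - \lceil d/2\rceil$ free points of $S_1$ (dimension $n(x-\lceil d/2\rceil)$), giving
\[
\dim K = 2(n-1) + \lceil d/2\rceil + n(x-\lceil d/2\rceil) = \lceil d/2\rceil + n - 1 + n(x - \lceil d/2\rceil) + (n-1),
\]
so I must be slightly more careful: the correct count is that once $\lceil d/2\rceil \ge 2$ the line $L$ is determined by any two of its points, so choosing $\lceil d/2\rceil$ points on a varying line costs $2(n-1) + 2\cdot 1 + (\lceil d/2\rceil - 2) = 2n - 2 + \lceil d/2\rceil$ only when $\lceil d/2\rceil \ge 2$, and comparing with the claimed value $\lceil d/2\rceil + n - 1 + n(x-\lceil d/2\rceil)$ shows the "colinear block" must actually contribute $\lceil d/2\rceil + n - 1$ — i.e. the points on $L$ together with their span eat up $n-1$ extra parameters beyond the $\lceil d/2\rceil$ points themselves (the line through them plus the freedom of where those points sit). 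Writing this honestly: the block of colinear points contributes $\dim S(L,\lceil d/2\rceil) + \dim\{L\} = \lceil d/2\rceil + 2(n-1)$ in general, but the formula wants $\lceil d/2 \rceil + (n-1)$, so in fact $K$ should be built with $S_0$ spanning a $\PP^{\lceil d/2\rceil - 1}$-type configuration rather than a line when $\lceil d/2\rceil$ is large; the right model is that $S_0$ is a general set of $\lceil d/2\rceil$ points in a general linear subspace $M \cong \PP^{\lceil d/2\rceil - 1}$, and $\dim(\text{Grassmannian of such } M) + \dim S(M,\lceil d/2\rceil) = (\lceil d/2\rceil-1)(n - \lceil d/2\rceil + 1) \cdot \text{(...)}$ — at which point the bookkeeping must be matched exactly to $\lceil d/2\rceil + n - 1 + n(x-\lceil d/2\rceil)$. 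For $\eta(K)$: the $x - \lceil d/2\rceil$ points $q_i$ are genuinely general, and adding one of the colinear points $p_1$ (which can be any point of $\PP^n$, since $M$ and the $p_i$ vary) is also general, giving $\eta(K) \ge x - \lceil d/2\rceil + 1$; the reverse inequality $\eta(K) \le x - \lceil d/2\rceil + 1$ follows because by Remark \ref{rmk:eta}, $\dim K \le (n-1)x + \eta(K)$, and plugging in the computed $\dim K$ gives $\eta(K) \ge \lceil d/2\rceil + n - 1 + n(x-\lceil d/2\rceil) - (n-1)x = \dots$ — wait, that gives a lower bound again, so the upper bound on $\eta$ must instead come from the fact that $\lceil d/2\rceil - 1$ of the points of $S$ are constrained to lie on the subvariety $M$ once the other $x - \lceil d/2\rceil + 1$ are fixed generically, which is where the defining cohomological constraint $h^1 > 0$ is used.

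The main obstacle I expect is precisely this last point — proving $\eta(K) = x - \lceil d/2\rceil + 1$ exactly, i.e. the upper bound. Showing $\eta(K) \ge x - \lceil d/2\rceil + 1$ is a soft genericity statement, but showing it is not larger requires ruling out that a general configuration of $x - \lceil d/2\rceil + 2$ points extends to a member of $K$; this forces one to understand the defining equations of $K$ (equivalently, to show that membership in $\TT(n,d;x)$ with this particular $h^1$-behaviour genuinely constrains $\lceil d/2\rceil - 1$ of the coordinates). The clean way to do this is to show that for $S \in K$ general, there is a \emph{unique} divisor $D \in |\Ii_{2S_0}(1)|$ or analogous "witnessing" subscheme, and that the remaining points are then forced; this is a transversality/uniqueness argument of Horace type and is the crux. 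A secondary technical point is the exact parameter count for $\dim K$, which must be reconciled with the stated formula by correctly identifying the geometry of the "special" block $S_0$ (line versus higher-dimensional linear space) depending on the parity of $d$; I would handle $d$ even and $d$ odd separately if needed, since $2\lceil d/2\rceil$ equals $d$ or $d+1$ respectively and the residual scheme on the ambient special linear space behaves slightly differently.
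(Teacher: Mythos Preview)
Your core idea---putting some collinear points on a line $L$ and letting the remaining points be general---is exactly the construction the paper uses. The gap is a simple but fatal miscount: you put $\lceil d/2\rceil$ points on $L$, whereas the correct number is $\lceil d/2\rceil+1$. Your justification ``$2S_0\cap L$ has length $2\lceil d/2\rceil\ge d+1$, so $h^1(L,\Ii_{2S_0\cap L,L}(d))>0$'' is wrong on two counts: on $\PP^1$ one needs degree $\ge d+2$ (not $d+1$) for $h^1$ to be positive, and in any case $2\lceil d/2\rceil\le d+1$. With $\lceil d/2\rceil+1$ collinear points one gets $\deg(2S\cap L)\ge 2(\lceil d/2\rceil+1)\ge d+2$, and then $h^1(\Ii_{2S}(d))\ge h^1(\Ii_{2S\cap L}(d))>0$ directly, without any residual sequence or Horace argument.

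This miscount is precisely what throws off your dimension computation and sends you on the detour into higher-dimensional linear subspaces. With the correct count the parameters are: the line $L$ in $G(2,n+1)$ ($2n-2$), the $\lceil d/2\rceil+1$ points on $L$ ($\lceil d/2\rceil+1$), and the remaining $x-\lceil d/2\rceil-1$ free points ($n(x-\lceil d/2\rceil-1)$), giving
\[
(2n-2)+(\lceil d/2\rceil+1)+n(x-\lceil d/2\rceil-1)=\lceil d/2\rceil+n-1+n(x-\lceil d/2\rceil),
\]
exactly as claimed. No higher-dimensional $M$ is needed, and no parity split on $d$.

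For $\eta(K)$: the lower bound $\eta(K)\ge x-\lceil d/2\rceil+1$ holds because the $x-\lceil d/2\rceil-1$ free points together with any two of the collinear points form a general $(x-\lceil d/2\rceil+1)$-tuple. The upper bound, which worried you, is not a cohomological transversality statement at all: every $S\in K$ contains $\lceil d/2\rceil+1\ge 3$ collinear points, while a general subset of $\PP^n$ of size $x-\lceil d/2\rceil+2$ contains no three collinear points; since $\#(S\setminus S')=\lceil d/2\rceil-2$ is too small to absorb all but two of the collinear block, no such general $S'$ can sit inside $S$.
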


\begin{proof}
Fix any $S\in S(\PP^n,x)$. Since $\deg(2S) =(n+1)x\le \binom{n+d}{n}$, we have that $S\in \TT(n,d;x)$ if and only if $\langle S\rangle =\PP^n$ and $h^1(\Ii_{2S}(d)) >0$.

We will first define an irreducible quasi-projective variety $K_1\subseteq \TT(n,d;x)$ 
and then we will take as $K$ the closure of $K_1$ in $\TT(n,d;x)$.

Let $G(2,n+1)$ be the Grassmannian of lines in $\PP^n$. 
{Consider the set $$E\subset S(\PP^n,\lceil d/2\rceil +1)\times G(2,n+1)$$ formed by all pairs $(A,L)$ with $A\in S(\PP^n,\lceil d/2\rceil +1)$, $L\in G(2,n+1)$ such that $A\subset L$.}
Since $\dim G(2,n+1)=2n-2$, {the projection $$S(\PP^n,\lceil d/2\rceil +1)\times G(2,n+1)\to G(2,n+1)$$ shows that $E$ is an irreducible quasi-projective variety of dimension $(2n-2)+ (\lceil d/2\rceil+1).$ 
Let $K_2\subset S(\PP^n,\lceil d/2\rceil +1)$
be the image of $E$ by the projection to the first factor. Since $\lceil d/2\rceil+1> 2$ and any two points are contained in a unique line, $K_2$ is an irreducible constructible subset of $S(\PP^n,\lceil d/2\rceil +1)$ and $\dim K_2= 2n-2+ \lceil d/2\rceil +1$. The set $K_2$ is the the set of all collinear elements of $S(\PP^n,\lceil d/2\rceil +1)$. Now, let $K_1$ be the set of all $S\in S(\PP^n,x)$ containing an element of $K_2$ and such that $\langle S\rangle=\PP^n$.}
Notice that, since $\deg(2S\cap L)\ge 2(\lceil d/2\rceil +1)\ge d+2$, then we have $h^1(\Ii_{2S}(d))\ge h^1(\Ii_{2S\cap L}(d)) >0$. Therefore $K_1\subseteq \TT(n,d;x)$.

{Let $K$ be the closure of $K_1$.}
We have $\dim(K)=(2n-2+\lceil d/2\rceil +1) +n(x-1-\lceil d/2\rceil)$.  
It is easy to check that $\eta(K) = (x-\lceil d/2\rceil-1)+2=x-\lceil d/2\rceil+1$. 
\end{proof}

\begin{example}\label{ex1} 
Take $n=2$, $d=4$ and $x=4$.
The hypothesis of Theorem \ref{THM 39} are verified, hence there is a component of $\TT(2,4;4)$ of dimension $7$.
Note that we have $\TT(2,4;4)\ne S(\PP^2,4)$, because $h^1(\Ii_{2A}(4)) =0$ if $A$ is given by $4$ points in linear general position.
\end{example}

The next example shows that $\TT(5,4;21)$ contains a codimension one variety whose general member is \emph{minimally Terracini} in the sense of \cite{bb1}.
\begin{example}\label{n5d4}
Take $n=5$, $d=4$ and $x=21$. We have $h^0(\Oo_{\PP^5}(4))/(n+1) = \binom{9}{4}/6 =21$. By the Alexander-Hirschowitz Theorem \ref{AHthm}, we have $h^i(\Ii_{2S}(4)) =0$, $i=0,1$, for a general
$S\in S(\PP^5,x)$. We have $h^0(\Oo_{\PP^5}(2)) =21$. Thus $h^0(\Ii_{S'}(2)) =1$ for a general $S'\in S(\PP^5,20)$. Consider the $104$-dimensional irreducible family $\Psi\subset S(\PP^5,21)$
given by all $S$ such that $h^0(\Ii_S(2)) =1$ and such that the only element of $|\Ii_S(2)|$ is irreducible. 
{Since $h^1(\Ii_{2S}(4))= h^0(\Ii_{2S}(4))>0$, then $S\in\TT(5,4;21)$.
Moreover, we prove that $S$ is minimally Terracini.}
Fix any irreducible $Q\in |\Oo_{\PP^5}(2)|$ and take a general $S\in S(Q,21)$.
By the generality of $S$ we have $|\Ii_{S_1}(2))|=\{Q\}$ for all $S_1\subset S$ such that $\#(S_1)=20$. Hence $h^1(\Ii_{S_1}(2))=0$. The residual exact sequence of $Q$ gives $h^1(\Ii_{2S_1}(4)) =0$. 
\end{example}

The following result gives a complete description of $\TT(3,3;5)$.

\begin{proposition}\label{t17.1}
$\TT(3,3;5)$ is irreducible of dimension $14$, formed by all $S\in S(\PP^3,5)$ such that $\langle S\rangle =\PP^3$ and $4$ of the points of $S$ are coplanar. The action of $PGL(4)$ on $S(\PP^3,5)$ sends $\TT(3,3;5)$ into itself with two orbits: an orbit $\Phi$ of dimension $13$ formed by all $S\in \TT(3,3;5)$ containing $3$ collinear points and the open orbit
$\TT(3,3;5)\setminus \Phi$.\end{proposition}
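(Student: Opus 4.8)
The plan is to prove \(\TT(3,3;5)\) is exactly the set of \(S\in S(\PP^3,5)\) spanning \(\PP^3\) with four coplanar points, then analyse the \(PGL(4)\)-action. First I would establish the cubic count: \(h^0(\Oo_{\PP^3}(3))=20=4\cdot 5\), so \(\deg(2S)=20=\binom{3+3}{3}\) and Remark~\ref{0u1} applies. Thus for \(S\) spanning \(\PP^3\), membership in \(\TT(3,3;5)\) is equivalent to \(h^0(\Ii_{2S}(3))>0\). By the Alexander--Hirschowitz Theorem (Theorem~\ref{AHthm2}), since \((3,3,5)\) is not in the exceptional list, a general \(S\in S(\PP^3,5)\) has \(h^0(\Ii_{2S}(3))=0\); hence \(\TT(3,3;5)\) is a proper closed subset and every irreducible component has dimension \(\le 14\).

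Next I would show that \(S\) having four coplanar points (and spanning \(\PP^3\)) forces \(h^0(\Ii_{2S}(3))>0\). Let \(H\) be the plane through \(p_1,\dots,p_4\) and \(p_5\notin H\). On \(H\cong\PP^2\) the scheme \(2S\cap H\) contains the first-order neighbourhoods of four points of the plane inside \(H\), which has degree \(3\cdot 4=12>10=h^0(\Oo_{\PP^2}(3))\), so \(h^1(\Ii_{2S\cap H,H}(3))>0\); more precisely four double points in \(\PP^2\) impose at most \(9\) conditions on cubics (they lie on a conic, the square of which... actually use: four general points of \(\PP^2\) have a pencil of conics through them, hence the \(2\)-fat scheme is contained in a cubic), giving a cubic \(C\subset H\) through \(2(S\cap H)\). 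Then use the residual exact sequence \(0\to\Ii_{Z'}(2)\to\Ii_{2S}(3)\to\Ii_{2S\cap H,H}(3)\to 0\) where \(Z'\) is the residual scheme (the reduced point \(p_5\) together with nothing else on \(H\), i.e. \(Z'=\{p_5\}\) union the residual of the \(p_i\), which are just the \(p_i\) themselves): concretely \(h^0(\Ii_{2S}(3))\ge h^0(\Ii_{p_1,\dots,p_4,p_5}(2))\cdot(\text{lift})\) — more cleanly, the cubic \(H\cdot(\text{conic through }p_1,\dots,p_4,p_5)\) vanishes doubly along \(S\cap H\) if the conic is chosen through the five points, which is possible since \(h^0(\Ii_{\{p_1,\dots,p_5\}}(2))\ge 10-5>0\). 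I would spell this out: pick \(Q\in|\Ii_{\{p_1,\dots,p_5\}}(2)|\) nonzero; then \(HQ\) is a cubic singular at \(p_1,\dots,p_4\) (as both \(H\) and \(Q\) vanish there) and singular at \(p_5\) (\(H(p_5)=0\) and... no, \(H\) doesn't vanish at \(p_5\)). So instead take the residual sequence properly: \(h^0(\Ii_{2S}(3))\) surjects enough from \(h^0(\Ii_{2S+H}(3))\oplus\) restriction; the correct bound is \(h^0(\Ii_{2S}(3))\ge h^0(\Ii_{2(S\cap H),H}(3)) - \) conditions, but the clean argument is dimension-counting as in Remark~\ref{rmk:eta}-style spread arguments. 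The honest approach: the family of such \(S\) is irreducible of dimension \(2\cdot 4 + 3\cdot 1 + 3 = 14\) (choose \(H\in(\PP^3)^\vee\): \(3\) parameters; four points on \(H\): \(8\); \(p_5\in\PP^3\): \(3\)), and it is contained in \(\TT(3,3;5)\) by the above cohomology, so it is a \(14\)-dimensional component, hence (by the AH bound) a full component.

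Then I would prove the reverse inclusion: if \(S\in\TT(3,3;5)\) spans \(\PP^3\) but no four points are coplanar, derive a contradiction. If the five points are in linear general position, one can apply a suitable specialization/semicontinuity argument or directly compute: five points in general position in \(\PP^3\), their \(2\)-fat scheme has degree \(20\) and imposes independent conditions on cubics by Alexander--Hirschowitz, so \(h^0(\Ii_{2S}(3))=0\), contradicting \(S\in\TT\). For the boundary cases where points are special but no four are coplanar (e.g. three collinear but the other two generic), I would argue that three collinear points \(p_1,p_2,p_3\) on a line \(\ell\): then \(2S\cap\ell\) has degree \(\ge 2\cdot 3=6 > 4 = h^0(\Oo_\ell(3))\), wait that would put \(S\) in the Terracini locus — but the claim says \(\TT(3,3;5)\) requires four coplanar, and three collinear points are automatically among four coplanar (add any fourth point, but it need not be \(\in S\)). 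So I must be careful: the statement says four of the points of \(S\) are coplanar, which three collinear points of \(S\) plus a fourth point of \(S\) always satisfy if \(S\) has \(\ge 4\) points — three collinear points and any fourth point are always coplanar. Hence ``no four coplanar'' already excludes three collinear, and we only need the genuinely general position case, handled by AH. So the reverse inclusion reduces to: \(S\) in linear general position \(\Rightarrow\) \(h^1(\Ii_{2S}(3))=0\), which is AH again. Thus \(\TT(3,3;5)\) is precisely the coplanar-four locus, which I showed is irreducible of dimension \(14\).

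Finally, the orbit analysis under \(PGL(4)\): since \(\TT(3,3;5)\) is \(PGL(4)\)-invariant and \(\dim PGL(4)=15\), the orbits have dimension \(\le 14\). Given \(S\) with \(p_1,\dots,p_4\) coplanar (spanning the plane \(H\), since if they spanned only a line we are in the sub-case below) and \(p_5\notin H\): four points spanning a plane plus a point off it — \(PGL(4)\) acts transitively on such configurations (it acts transitively on the space of \((H,p_5)\) and on ordered planar quadruples in general position in a plane), and the stabilizer is finite, modulo the \(S_5\)-ambiguity this is a single orbit of dimension \(14\); this is the open orbit \(\TT(3,3;5)\setminus\Phi\). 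The closed sub-locus \(\Phi\) where additionally three of the points are collinear: an ordered configuration is \((p_1,p_2,p_3\) on a line \(\ell\), and \(p_4,p_5)\) arbitrary (with \(S\) spanning \(\PP^3\)); \(PGL(4)\) acts transitively on these too with finite stabilizer, giving \(\dim\Phi = 3\cdot 1 + 3 + 3 + 3 - \text{(line mod reparam already counted)}\); cleaner: three collinear points depend on \(4 + 2 = 6\) parameters (line: \(4\) in \(G(2,4)\); three points on it: but the set is \(6\)), plus \(p_4\): \(3\), plus \(p_5\): \(3\), total... let me instead note \(\Phi\) is the image of an irreducible variety and its dimension is \(13\) by the direct count \(\dim G(2,4)+3+3+3 = 4+3+3+3=13\), and it is a proper closed subset of \(\TT(3,3;5)\) since the generic coplanar quadruple has no three collinear, so \(\dim\Phi=13<14\). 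Transitivity of \(PGL(4)\) on \(\Phi\) (up to \(S_5\)) follows since any two such ordered configurations are projectively equivalent. The main obstacle I anticipate is making the cohomological computation for the reverse inclusion fully rigorous in the genuinely boundary strata — but as noted, ``no four coplanar'' collapses all interesting strata, so the only case is linear general position, settled by Alexander--Hirschowitz; the remaining work is the elementary orbit bookkeeping.
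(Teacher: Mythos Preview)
Your overall strategy matches the paper's: identify $\TT(3,3;5)$ with the locus of spanning $S$ having four coplanar points by proving both inclusions, then decompose into $PGL(4)$-orbits. The reverse inclusion and the orbit analysis are essentially fine (your observation that ``no four coplanar'' already forces linear general position, reducing everything to Alexander--Hirschowitz plus $PGL(4)$-transitivity on $5$-tuples in general position, is exactly the paper's argument). The dimension counts, while messy, land correctly.

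The genuine gap is in the forward inclusion (four coplanar $\Rightarrow S\in\TT(3,3;5)$). You correctly observe $h^1(\Ii_{2S\cap H,H}(3))>0$ since $\deg(2S'\cap H)=12>10$, but you never actually deduce $h^1(\Ii_{2S}(3))>0$ from this. Your residual sequence attempt misidentifies the residual scheme: for $p_5\notin H$ the residual of $2p_5$ with respect to $H$ is $2p_5$, not the reduced point $p_5$; so $Z'=\{p_1,\dots,p_4\}\cup 2p_5$, not what you wrote. Your explicit cubic $H\cdot Q$ with $Q$ a quadric through all five points fails at $p_5$, as you noticed, and you then retreat to ``by the above cohomology'' without having established it. The paper closes this step cleanly by invoking \cite[Lemma~2.7]{bb1}, which gives precisely $h^1(\Ii_{2S\cap H}(3))>0\Rightarrow h^1(\Ii_{2S}(3))>0$. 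Alternatively, your residual sequence does work once $Z'$ is corrected: since $Z'$ is zero-dimensional one has $h^2(\Ii_{Z'}(2))=0$, and the long exact sequence yields $h^1(\Ii_{2S}(3))\ge h^1(\Ii_{2S\cap H,H}(3))>0$. Either fix is short, but as written the argument is incomplete.

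A minor point: when you say Alexander--Hirschowitz gives $h^0(\Ii_{2S}(3))=0$ for \emph{any} $S$ in linear general position, that is not literally what the theorem says (it concerns a \emph{general} $S$); you need the $PGL(4)$-transitivity on such configurations, which you do invoke later, to upgrade ``generic'' to ``all in linear general position''. The paper makes this explicit by first setting up the three-orbit decomposition and then applying Alexander--Hirschowitz to the open orbit.
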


\begin{proof} 
The group $PGL(4)$ acts on $S(\PP^3,4)$ and on $S(\PP^3,5)$ and in both cases it has an open orbit in the Zariski topology. The open orbit of $S(\PP^3,4)$ is formed by the linearly independent subsets. The open orbit of $S(\PP^3,5)$ is formed by all $S\in S(\PP^3,5)$ in linearly general position, i.e. the set $S$ such that all proper subsets of $S$ are linearly independent.

{The action of $PGL(4)$ on the subset of $S(\PP^3, 5)$ given by the sets which span $\PP^3$ has three orbits. The open orbit is given by points in linearly general position.
A second orbit $\Psi$ (of dimension 14) 
is formed by the sets $S$ such that no three of its points are collinear, but there is $A\subset S$ such that $\#A =4$ and $A$ spans a plane.
The third orbit $\Phi$ (of dimension 13) is formed by the sets  containing  three collinear points.} Clearly $\Phi\subset \overline\Psi$.

We prove now that $ \TT(3,3;5)=\overline\Psi$.
Clearly, by the Alexander-Hirschowitz Theorem \ref{AHthm} the elements of the open orbit are not Terracini. Hence $ \TT(3,3;5)\subseteq \overline\Psi$. 

Now we prove the other inclusion. Let $S\in \Psi$, that is {assume that} $\#(S)=5$, $\langle S\rangle =\PP^3$ %%and \rosso{some} four of the points of $S$ are coplanar. 
{and there exists $A\subset S$ such that $\#A=4$ and $\dim \langle A\rangle =2$.
}

Let $S'\subset S$ a subset of four points and  $H=\langle S'\rangle$ the plane spanned by $S'$.
Since $h^0(\Oo_{\PP^2}(3)) =10$ and $\deg(2S')=12$, then 
we have $h^1(\Ii_{2S\cap H}(3)) >0$. Hence, by \cite[Lemma 2.7]{bb1}, it follows that $h^1(\Ii_{2S}(3)) >0$.
Since $h^0(\Ii_{2S}(3))=h^1(\Ii_{2S}(3))$, we get that $\Psi\in\TT(3,3;5)$.
\end{proof}

\section{The planar case}

From now on, we fix $n=2$.

\begin{remark}\label{casipiccoli}
We know from \cite[Example 4.6 and Example 5.1]{ballico-chiantini} that if $(d,x)=(5,6)$ and $(d,x)=(6,9)$ the Terracini locus has a component of codimension $1$. {Moreover, from \cite[Example 4.6]{ballico-chiantini} it follows that the codimension of $\TT(2,5;5)$ is greater than 1.}
\end{remark}

\begin{proposition}\label{caso 6,8}
There is no irreducible component of $\TT(2,6;8)$ of codimension one.
\end{proposition}
\begin{proof}
Assume by contradiction the existence of an irreducible component $K$ of $\TT(2,6;8)$ of dimension $15=nx-1$. Then by Remark \ref{rmk:eta}, we have that the spread is $\eta(K) =7$. 
Fix a general $S'\in S(\PP^2,7)$ and a set $S\in K$ such that $S\supset S'$ and  $h^1(\Ii_{2S}(d))>0$.

By the curvilinear lemma, see e.g. \cite[Lemma 2.9]{bb1},
there exists a scheme $\xi \subset 2S$ such that any connected component of $\xi$ has
degree less or equal than $2$ and such that $h^1(\Ii_\xi(d))>0$.
Let $v$ be the connected component of $\xi$ supported at $p=S\setminus S'$. Clearly $h^1(\Ii_{2S'\cup v}(d))\ge h^1(\Ii_\xi(d))>0$.

Fix a general line $L\subset \PP^2$ containing $v$, i.e. spanned by $v$.
Consider the following residual exact sequence with respect to $L$:
\begin{equation}\label{equltima}
0 \to \Ii_{\Res_L(v\cup 2S')}(5)\to \Ii_{2S'\cup v}(6)\to \Ii_{v\cup (L\cap 2S'),L}(6)
\to 0
\end{equation}
where $\Res_L(v\cup 2S')$ is the residual with respect to $L$ and $ v\cup (L\cap 2S')$ is the trace.
Now,
since $\Res_L(v\cup 2S')=\Res_L(2S') \subseteq 2S'$, we have 
$h^1(\Ii_{\Res_H(v\cup 2S')}(5))\le h^1(\Ii_{2S'}(5)) =0$. 

Since $S'$ is general, $\#(S'\cap L)\le 2$. Hence we have $h^1(L,\Ii_{v\cup (L\cap 2S'),L}(6))=0$.

By the long cohomology exact sequence of \eqref{equltima}, we conclude $h^1(\Ii_{2S'\cup v}(d)) =0$, which is a contradiction.
\end{proof}

{\begin{notation} \label{severi}
Let $V_{x,d}$ denote the Severi variety of all irreducible degree $d$ plane curves with exactly $x$ nodes as singularities. 
{For an introduction to Severi varieties see e.g.\ \cite[pp.\ 29-32]{harris-morrison} and \cite{fedorchuck}.}
{Clearly, $V_{x,d}$ is non-empty if and only if $x \le (d-1)(d-2)/2.$}

It is known that $V_{x,d}$ is irreducible {(\cite{harris} and \cite[Chapter 6, Section E]{harris-morrison})} of 
dimension $\binom{d+2}{2}-1-x$.
Let $\phi: V_{x,d}\to S(\PP^2,x)$ denote the map $C\mapsto \mathrm{Sing}(C)$.
\end{notation}}

We recall the following theorem of Treger.
\begin{theorem}[\cite{treger}]\label{u1}
Let  $d\ge 6$, $0\le x\le (d-1)(d-2)/2$, and $V_{x,d}$ the Severi variety of the irreducible degree $d$ plane curves with exactly $x$ nodes as singularities. 
 Assume
$x\ge d(d+3)/6$
and $(d,x)\ne (6,9)$. Then the map $\phi$ defined in Notation \ref{severi} is birational onto its image.
\end{theorem}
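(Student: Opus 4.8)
The plan is to prove Theorem~\ref{u1} by exhibiting an explicit inverse map on a dense open subset of the image $\phi(V_{x,d})$. The essential point is that a general nodal curve $C \in V_{x,d}$ with $x \ge d(d+3)/6$ nodes is the \emph{unique} element of $|\Ii_{2\,\mathrm{Sing}(C)}(d)|$; equivalently, $h^0(\Ii_{2S}(d)) = 1$ where $S = \mathrm{Sing}(C)$. Granting this, one recovers $C$ from $S$ by taking the (reduced) unique degree-$d$ curve singular at the $x$ points of $S$, which shows $\phi$ is injective on a dense open set, hence birational onto its image (we are in characteristic zero, so generic injectivity of a morphism of varieties yields birationality onto the image).

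First I would set up the dimension count. By Notation~\ref{severi}, $\dim V_{x,d} = \binom{d+2}{2} - 1 - x$, and the expected codimension of $|\Ii_{2S}(d)|$ inside $|\Oo_{\PP^2}(d)|$ is $3x$, so the expected dimension of $|\Ii_{2S}(d)|$ is $\binom{d+2}{2} - 1 - 3x$. For a \emph{general} $S \in \phi(V_{x,d})$ one expects $h^0(\Ii_{2S}(d)) - 1 = \binom{d+2}{2} - 1 - 3x$, which is $\le 0$ precisely when $x \ge \frac{1}{3}\binom{d+2}{2} = d(d+3)/6$; this is exactly the hypothesis. So the content is to show that for general $C \in V_{x,d}$ the scheme $2S$ imposes independent conditions on degree-$d$ forms up to the single curve $C$ itself, i.e.\ $h^1(\Ii_{2S}(d)) = \binom{d+2}{2} - 3x$ and $h^0(\Ii_{2S}(d)) = 1$ — and here one must be careful: $h^1$ need not vanish, but one only needs $h^0 = 1$. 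I would invoke the irreducibility of $V_{x,d}$ (Harris) to reduce to checking $h^0(\Ii_{2S}(d)) = 1$ for a single well-chosen $C$, then use a degeneration/specialization argument: the nodes of a general $C$ can be specialized to a configuration (e.g.\ nodes lying on a suitable union of lines, or a curve acquiring nodes by a standard Bertini-type construction) for which the bound is checkable by the Horace method — splitting off a line $L$, using the residual exact sequence $0 \to \Ii_{W'}(d-1) \to \Ii_{2S}(d) \to \Ii_{2S \cap L}(d)|_L \to 0$, and inducting on $d$. The two excluded cases $d \le 5$ and $(d,x) = (6,9)$ are precisely the low-degree exceptions (related to the Alexander--Hirschowitz list) where $h^0(\Ii_{2S}(d))$ jumps: for $(6,9)$ one has $h^0(\Ii_{2S}(6)) = \binom{8}{2} - 27 + h^1 = 1 + h^1$ with $h^1 > 0$, and indeed there is a positive-dimensional family of sextics singular at $9$ general points (the Alexander--Hirschowitz exceptional case $(2,4,5)$ rescaled — more precisely, the $(2,6,9)$ case is the relevant defective one), so $\phi$ has positive-dimensional fibers there.

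The main obstacle I expect is the verification that $h^0(\Ii_{2S}(d)) = 1$ rather than the weaker statement $h^1(\Ii_{2S}(d)) = \binom{d+2}{2} - 3x$: one genuinely wants a \emph{maximality} statement on $h^0$ for the \emph{nodal-curve} configuration, not a general configuration of points, and nodal configurations are special (they lie on a curve of the expected degree by construction). So the specialization must be engineered to stay within the closure of $\phi(V_{x,d})$ while remaining computable — typically by taking $C$ to degenerate to a curve whose nodes distribute in a controlled way onto lines, keeping enough of the nodes general on $\PP^2$. An alternative, cleaner route — and the one I would actually try first — is to cite the known classification of when degree-$d$ plane curves with $x$ imposed double points fail to have the expected dimension (this is the $n=2$ Alexander--Hirschowitz statement, Theorem~\ref{AHthm}), which for $n=2$ says the only exception with $x$ \emph{general} points is $(d,x)=(4,5)$; combined with semicontinuity and the irreducibility of $V_{x,d}$, this forces $h^0(\Ii_{2S}(d)) = 1$ for general $S \in \phi(V_{x,d})$ as soon as $x \ge d(d+3)/6$ and the configuration is not forced into a bad locus, the surviving bad cases being exactly $d \le 5$ and $(6,9)$. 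I would then conclude birationality of $\phi$ by the generic-injectivity criterion, noting that $\dim \phi(V_{x,d}) = \dim V_{x,d}$ follows from the fibers being finite (indeed singletons) on a dense open set.
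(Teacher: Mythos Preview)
The paper does not prove this theorem at all: it is quoted from Treger \cite{treger} and used as a black box. So there is no ``paper's own proof'' to compare against, and the relevant question is whether your sketch would stand on its own.

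Your reduction is correct: $\phi$ is birational onto its image as soon as $h^0(\Ii_{2S}(d))=1$ for $S=\mathrm{Sing}(C)$ with $C\in V_{x,d}$ general, and by irreducibility of $V_{x,d}$ it suffices to exhibit one such $C$. The first approach you outline (degenerate $C$ so that the nodes specialize to a configuration where the Horace method applies, and induct on $d$) is indeed the shape of Treger's argument, but what you have written is only a plan, not a proof: the delicate point is precisely to control the specialization so that the limit configuration still lies in $\overline{\phi(V_{x,d})}$ and so that the residual sequences terminate. This is the substance of \cite{treger} and cannot be waved through.

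Your ``cleaner route'' has a genuine gap. Alexander--Hirschowitz bounds $h^0(\Ii_{2A}(d))$ for a \emph{general} $A\in S(\PP^2,x)$, but the nodal locus $S=\mathrm{Sing}(C)$ is \emph{not} general: by construction it supports a degree-$d$ curve with double points, and when $3x\ge\binom{d+2}{2}$ such $S$ form a proper closed subset of $S(\PP^2,x)$. Upper semicontinuity of $h^0$ then runs the wrong way---it allows $h^0(\Ii_{2S}(d))$ to \emph{jump up} on this special locus, which is exactly what you must rule out. So Alexander--Hirschowitz plus semicontinuity does not give the conclusion. (Relatedly, your aside that $(2,6,9)$ is an Alexander--Hirschowitz defective case is incorrect: the only $n=2$ exception with $d\ge3$ is $(2,4,5)$. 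The failure of birationality at $(d,x)=(6,9)$ is a genuinely different phenomenon, tied to the pencil of cubics through nine points in special position, and is not detected by the Alexander--Hirschowitz list.)
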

As a consequence, we have the following result.

\begin{corollary}
Assume  $d\ge 6$ and
$${(d+2)(d+1)/6 }\le x \le (d-1)(d-2)/2.$$
Then $\TT(2,d;x)$ contains an irreducible family  of dimension $\binom{d+2}{2}-1-x$.
\end{corollary}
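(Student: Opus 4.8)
The plan is to produce the irreducible family inside $\TT(2,d;x)$ directly as the image of the Severi variety $V_{x,d}$ under the map $\phi$ of Notation \ref{severi}. First I would observe that the hypothesis $x\ge (d+2)(d+1)/6$ together with $d\ge 6$ gives $x\ge d(d+3)/6$ (since $(d+2)(d+1)/6 - d(d+3)/6 = 2/6 > 0$... actually one should check $(d+2)(d+1) \ge d(d+3)$, i.e. $d^2+3d+2\ge d^2+3d$, which is clear), so Treger's Theorem \ref{u1} applies whenever $(d,x)\ne(6,9)$: the map $\phi$ is birational onto its image, hence $\phi(V_{x,d})$ is an irreducible subset of $S(\PP^2,x)$ of dimension $\binom{d+2}{2}-1-x$. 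The remaining case $(d,x)=(6,9)$ can be handled separately by Remark \ref{casipiccoli}, which records that $\TT(2,6;9)$ has a component of codimension one, i.e. of dimension $2\cdot 9-1 = 17 = \binom{8}{2}-1-9$.

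Next I would verify that $\phi(V_{x,d})\subseteq \TT(2,d;x)$. Let $C\in V_{x,d}$ and $S=\mathrm{Sing}(C)$, a set of $x$ nodes. The curve $C$ is an element of $|\Ii_{2S}(d)|$: indeed at an ordinary node the local equation has vanishing $0$-jet and $1$-jet, so $C$ passes through $2S$ (the first-order neighbourhood of each point). Hence $h^0(\Ii_{2S}(d))>0$. Since $\deg(2S) = 3x \ge (d+2)(d+1)/2 = \binom{d+2}{2} - \dots$; more precisely $3x\ge (d+2)(d+1)/2$ exactly when $x \ge (d+2)(d+1)/6$, which is our hypothesis, so $\deg 2S \ge h^0(\Oo_{\PP^2}(d))$, and the exact sequence in Remark \ref{0u1} forces $h^1(\Ii_{2S}(d)) = h^0(\Ii_{2S}(d)) - h^0(\Oo_{\PP^2}(d)) + \deg(2S) \ge h^0(\Ii_{2S}(d)) > 0$. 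Finally $\langle S\rangle = \PP^2$: if all $x\ge (d+2)(d+1)/6 \ge 7$ nodes of an irreducible degree-$d$ curve were collinear, the line would meet $C$ in $\ge 2x > d$ points (counted with multiplicity $\ge 2$ at each node), forcing the line to be a component of $C$, contradicting irreducibility (here one uses $d\ge 6$ so $2x\ge 14 > d$ fails only for small $d$; one checks $2x > d$ from $x\ge(d+2)(d+1)/6$, which gives $2x \ge (d+2)(d+1)/3 > d$ for all $d\ge 1$). Thus every such $S$ lies in $\TT(2,d;x)$, and $\phi(V_{x,d})$ is the desired irreducible family.

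The last point is to confirm the dimension count: $V_{x,d}$ is irreducible of dimension $\binom{d+2}{2}-1-x$ by Harris's theorem (cited in Notation \ref{severi}), and $\phi$ is generically injective by Treger, so its image has the same dimension $\binom{d+2}{2}-1-x$. This completes the proof in the range $(d,x)\ne(6,9)$, and the single exceptional pair is covered by Remark \ref{casipiccoli}.

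The main obstacle, such as it is, is really bookkeeping: checking that the numerical hypothesis $(d+2)(d+1)/6\le x$ simultaneously (i) puts $\deg 2S$ at or above $h^0(\Oo_{\PP^2}(d))$ so that $h^0>0 \Rightarrow h^1>0$ via Remark \ref{0u1}, and (ii) lies in the range $x\ge d(d+3)/6$ required by Treger's Theorem \ref{u1}; both are elementary inequalities but must be stated cleanly. The only genuinely non-formal input beyond the quoted results is the span condition $\langle S\rangle=\PP^2$, which follows from Bézout applied to the (irreducible) curve $C$ and a hypothetical line through all its nodes.
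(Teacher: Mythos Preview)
Your argument is correct and follows the same route as the paper: exhibit $\phi(V_{x,d})$, use Treger's theorem for the dimension, Remark~\ref{0u1} for $h^1>0$, and B\'ezout plus irreducibility of $C$ for the span condition. One small redundancy: the pair $(d,x)=(6,9)$ never satisfies the hypothesis, since for $d=6$ the lower bound $(d+2)(d+1)/6=56/6>9$ forces $x\ge 10$, so your separate treatment of $(6,9)$ via Remark~\ref{casipiccoli} is unnecessary (though harmless).
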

\begin{proof}{
Since $x\ge(d + 2)(d + 1)/6 =d(d+3)/6 +1$,
we can apply Theorem \ref{u1} and Remark \ref{0u1}. Note that for any $C\in V_{x,d}$, we have $\langle\varphi(C)\rangle=\PP^2$.
Indeed if the points $\varphi(C)$ were collinear, then the line through them would be a component of $C$, since $x>d$, and this is a contradiction, because $C$ is irreducible.
}
\end{proof}

In particular we have the following {family of examples.}
\begin{example}\label{e1}
Fix an integer $d\ge 7$ such that $d\equiv 1,2\pmod{3}$. Set $x:= (d+2)(d+1)/6$. 
Then the irreducible component $\overline{\phi(V_{x,d})}$ has dimension $2x-1$, 
because $\binom{d+2}{2}-1-x =2x-1$.
\end{example}

\begin{example}\label{caso(5,7)} 
Take $n=2, d=5, x=7$. This case is discussed in \cite[Example 5.3]{ballico-chiantini}. We give here a more detailed description
of $\TT(2,5;7)$ as a union of finitely many locally closed irreducible families, showing that there exists exactly one component $T$ of dimension $13$.

{Fix $S\in \TT(2,5;7)$. Recall that an irreducible plane quintic has at most $6$ singular points, hence any curve  $C\in|\Ii_{2S}(5)|$ is reducible and/or with multiple components.}

{By a case-by-case analysis, it is easy to see that any irreducible family of not too small dimension of $\TT(2,5;7)$ have  general member described in the following way:}

\smallskip

\quad $\bullet$
$T:=\{\mathrm{Sing}(C_3\cup C): C_3\in V_{1,3}, C \textrm{ a smooth conic}\}.$
  A general $S\in T$ is given by a point $p$ in general position and six other points obtained as the intersection of an irreducible conic with a cubic singular at $p$.
{Now we show that $\dim(T)=13$. Indeed, fix a smooth conic $C$ and take six points $S'\subset C$. The set of all such pairs $(C,S')$ has dimension $\dim |\Oo_{\PP^2}(2)|+6=5+6=11$. Note that the set of points $S'$ uniquely determines $C$, so also the set of all such $S'$ has dimension $11$.
Now, choose $p\in \PP^2\setminus C$ and take $S:= S'\cup \{p\}$. 
The set of all such $S$ has dimension $11+2=13$ and it coincides with $T\subset\TT(2,5;7)$. 
Indeed, given $S$ there is a unique cubic singular at $p$ and passing through $S'$, because
 $h^0(\Ii_{2p\cup S'}(3)) =1$.}
{We point out that, by construction, the family $T$ is irreducible.}

%\smallskip

%\quad $\bullet$ 
%$Z:=$ the family of all the sets $S$ of four collinear points and three  points in general position not collinear with the first ones. 
%The general curves in $|\Ii_{2S}(5)|$ are union of the double line through the first four points and the three lines through the last three points 
%and it is easy to compute that $\dim(Z)=12$.

\smallskip

\quad $\bullet$ 
$Y:=$ the family of the sets given by five points in general position and two points on the conic through the first five, and take the closure $\overline{Y}$. The general curves in $|\Ii_{2S}(5)|$  are the unions of the double conic and a line. We have $\dim Y =12$, because $\dim |\Oo_{\PP^2}(2)|=5$ and then we add $7$ points on any fixed conic.

\smallskip

\quad $\bullet$ 
If we assume that $z\ge 4$ points of $S$ are collinear, then we are always in the irreducible family $\overline{Z}$,
where $Z$ is the family of all the sets $S$ of four collinear points and three  points in general position (not collinear with the first ones).
In particular, $Z=\{\mathrm{Sing}(C_4\cup L): C_4\in V_{3,4}, L \textrm{ a line}\}$
does not give a component of dimension higher than $12$.

\smallskip

\quad $\bullet$ 
 $U:=\{\mathrm{Sing}(C\cup L\cup N): C \textrm{ a smooth cubic}, L,N \textrm{ two lines}\}.$ In this case we have $\dim(U)=10$ for the following reasons.  We choose three general points $P_1,P_2,P_3$ and set $L=\langle P_1,P_2\rangle$ and $N=\langle P_1,P_3\rangle$, then we choose other two points on $L$ and other two points on $N$.
 Any set of $6$ points of $\PP^2$  is contained in a plane cubic.
 Hence we have $\dim(U)=\dim S(\PP^2,3)+ 2+2= 10$.
\end{example}

\begin{lemma} \label{last} Assume $d\equiv 0\pmod{3}$, 
$d\ge 9$ and
$$x = \rho(2,d)=\left\lceil \frac{1}{3}\binom{d+2}{2}\right\rceil=\frac{d(d+3)}{6}+1,$$
Then we have $\dim \TT(2,d;x)\le 2x-2$.
\end{lemma}

\begin{proof} 
Assume by contradiction the existence of an irreducible component 
$K$ of $\TT(2,d;x)$ of dimension $2x-1$. {By Remark \ref{rmk:eta} we have $\eta(K)=x-1$.}
{Hence, given} a general $S\in K$, {there is a general $S'\subset S$}
 in $S(\PP^2,d(d+3)/6)$.
{Since} $3(x-1)<\binom{d+2}{2}$, {we have $h^0(\Ii_{2S'}(d))>0$.} 
Take  a general $C\in |\Ii_{2S'}(d)|$. 
Since $S'$ is general, {by the Alexander-Hirschowitz Theorem \ref{AHthm},} we have $h^0(\Ii_{2S'}(d)) =1$.
{Moreover, $H^0(\Ii_{2S}(d))=H^0(\Ii_{2S'}(d))$, since  $h^0(\Ii_{2S}(d))>0$.}
Hence 
{$|\Ii_{2S'}(d)|=\{C\}=|\Ii_{2S}(d)|$}, that is
$C$ is unique, once $S'$ is fixed. 
By {\cite[{Theorem 1.2}]{cov1}} $C$ has no multiple components,
hence 
$\mathrm{Sing}(C)$ is finite 
and $S\subseteq \mathrm{Sing}(C)$.

It follows that $$\dim(K)=2\#(S')+{\dim}|\Ii_{2S'}(d)|=2x-2,$$ which is a contradiction. 
\end{proof}

\begin{example}\label{ex2} 
Take $n=2$, $d=6$ and $x=10$. We find an irreducible component $K\subset \TT(2,6;10)$ of dimension $19$ and with $\eta(K)=9$, the maximal possible spread for a $19$-dimensional family. 
Fix a general $B\in S(\PP^2,9)$. Since $B$ is general, it is contained in a unique plane cubic, $C_B$.
The $19$-dimensional irreducible family $K$ is the closure of the set formed by all $B\cup \{p\}$, $B$ general in $S(\PP^2,9)$ and $p\in C_B\setminus B$. 
The Alexander-Hirschowitz Theorem \ref{AHthm} gives $\dim \TT(2,6;10)\le 19$. Thus $K$ is an irreducible component of $\TT(2,6;10)$.\end{example} 
\begin{example}\label{ex-nuovo}
Analogously to the previous example, it is easy to show that there is a component $K$ of $\TT(2,8;15)$ of dimension $29$.
Indeed $14$ general points, $B\in S(\PP^2,14)$, are contained in a unique plane quartic, $C_B$. Let $K$ be the closure of the set of $A=B\cup \{p\}$, where $B\in S(\PP^2,14)$ is general and $p\in C_B\setminus B$. 
Note that $\eta(K)=14$.
\end{example}

\begin{lemma}\label{Lem} 
Fix integers $d$ and $x$ such that $d\ge 5$ and $x <(d+2)(d+1)/6$. 
Then $\phi(V_{x,d})$ is an open dense set of $S(\PP^2,x)$. 
\end{lemma}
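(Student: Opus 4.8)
The claim is that when $d \ge 5$ and $x < (d+2)(d+1)/6$, the image $\phi(V_{d,x})$ of the Severi variety under the "singular locus" map is a dense open subset of $S(\PP^2, x)$. The natural strategy is a dimension-plus-irreducibility argument: show $\phi$ is generically injective (or at least generically finite) onto its image, compare dimensions, and conclude by irreducibility of both $V_{d,x}$ and $S(\PP^2,x)$ that the image is dense, then handle openness separately. Concretely, I would first recall from Notation \ref{severi} that $V_{d,x}$ is irreducible of dimension $\binom{d+2}{2} - 1 - x$, and that for $x \le (d-1)(d-2)/2$ it is nonempty; the hypothesis $x < (d+2)(d+1)/6$ certainly forces $x < (d-1)(d-2)/2$ for $d \ge 5$, so $V_{d,x} \ne \emptyset$.

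The heart of the argument is to show that through $x$ general points of $\PP^2$ there passes an irreducible plane curve of degree $d$ having exactly those $x$ points as nodes. I would approach this via the standard "count conditions" computation: imposing a node at a point is $3$ linear conditions on the space $|\Oo_{\PP^2}(d)|$ of dimension $\binom{d+2}{2}-1$, so the expected dimension of the space of degree-$d$ curves with $x$ assigned nodes is $\binom{d+2}{2}-1-3x$, which is $\ge 0$ precisely because $3x < (d+2)(d+1)/2 = \binom{d+2}{2}$, i.e. exactly our hypothesis (after checking $3x < (d+2)(d+1)/2 \iff x < (d+2)(d+1)/6$). For $x$ general points this bound is attained — this is essentially the statement that double points impose independent conditions, which is the content of the Alexander–Hirschowitz Theorem (Theorem \ref{AHthm}) in the plane: $h^1(\Ii_{2S}(d)) = 0$ for general $S$ once we are outside the exceptional case $(d,x) = (4,5)$ and within the numerical range, so $h^0(\Ii_{2S}(d)) = \binom{d+2}{2} - 3x > 0$. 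One then checks that a general member $C$ of $|\Ii_{2S}(d)|$ is irreducible and has no singularities beyond the $x$ prescribed double points, and that each prescribed double point is an ordinary node; irreducibility follows because a reducible or non-reduced curve through $x$ general double points would force too many of the points onto a low-degree component, contradicting generality (similar in spirit to the argument in the Corollary after Theorem \ref{u1}), and the "exactly nodes, no extra singularities" part is a Bertini-type genericity statement. This shows $\phi(V_{d,x})$ contains a general point of $S(\PP^2,x)$, hence (by irreducibility of $S(\PP^2,x)$) is dense.

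For openness I would argue that $\phi$ is, on a suitable open subvariety, a dominant morphism of irreducible varieties of the same dimension with generically reduced fibers — indeed the dimension count gives $\dim \phi(V_{d,x}) \le \dim V_{d,x} = \binom{d+2}{2}-1-x$, while on the other hand a general fiber over a point of $S(\PP^2,x)$ is $|\Ii_{2S}(d)|$, of dimension $\binom{d+2}{2}-1-3x-1 = \binom{d+2}{2}-3x-2$, so $\dim \phi(V_{d,x}) = (\binom{d+2}{2}-1-x) - (\binom{d+2}{2}-3x-2) = 2x + 1$... wait, this equals $\dim S(\PP^2,x) = 2x$ only when the fiber dimension is $\binom{d+2}{2}-3x-1$; so the correct reading is that $|\Ii_{2S}(d)|$ has projective dimension $\binom{d+2}{2}-3x-1$ for general $S$, the fibers of $\phi$ have that dimension, and $2x = \dim V_{d,x} - (\binom{d+2}{2}-3x-1)$ forces $\dim \phi(V_{d,x}) = 2x$, i.e. $\phi$ is dominant. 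Openness of the image then follows from the fact that the set of $S \in S(\PP^2,x)$ for which $h^1(\Ii_{2S}(d)) = 0$ and the general curve through $2S$ is an irreducible $x$-nodal curve is cut out by open conditions (semicontinuity of $h^1$, plus openness of irreducibility and of "the double points are the only singularities").

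The main obstacle I expect is the irreducibility/genericity step: verifying that the general curve through $2S$ for $S$ general really is irreducible with exactly $x$ nodes and no worse singularities. The dimension bookkeeping is routine once Alexander–Hirschowitz is invoked, but ruling out that the generic member of $|\Ii_{2S}(d)|$ acquires an extra node, a cusp, or splits off a component requires a genuine (if standard) argument — one clean way is to exhibit a single such curve by a degeneration or explicit construction and then invoke upper-semicontinuity / openness of the good locus. It is also worth double-checking the edge cases $d = 5, 6$ by hand against the known descriptions (Examples \ref{caso(5,7)}, \ref{ex2} and Remark \ref{casipiccoli}) and confirming that the Alexander–Hirschowitz exceptional triple $(n,d,x)=(2,4,5)$ does not intersect the range $d \ge 5$, so no exceptional case interferes.
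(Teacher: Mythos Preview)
Your plan is essentially the paper's own argument: take a general $A\in S(\PP^2,x)$, use Alexander--Hirschowitz to get $\dim|\Ii_{2A}(d)|=(d^2+3d)/2-3x\ge 0$, and then show that a general member of $|\Ii_{2A}(d)|$ is an irreducible curve with exactly the points of $A$ as ordinary nodes, so that $A\in\phi(V_{x,d})$; openness then follows from semicontinuity as you indicate.

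The one point worth flagging is the step you correctly single out as the main obstacle---that the general $C\in|\Ii_{2A}(d)|$ is irreducible, nodal, and has no singularities beyond $A$. Your sketched attacks (Bertini, ad hoc elimination of reducible/non-reduced members, or a single explicit construction plus openness) would all eventually work, but each requires genuine care. The paper sidesteps this entirely by citing \cite[Theorem~5.1]{cc2} (Chiantini--Ciliberto), which gives exactly this statement: for general $A$ and general $T_A\in|\Ii_{2A}(d)|$, the curve $T_A$ is nodal with $\mathrm{Sing}(T_A)=A$. If you are writing this up, that citation (or an equivalent one) is what you want; without it, your outline is correct but the crucial lemma remains unproved.
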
 
\begin{proof}
Take a general $A\in S(\PP^2,x)$. By the Alexander-Hirschowitz Theorem \ref{AHthm} 
we have $\dim |\Ii_{2A}(d)| = (d^2+3d)/2-3x$. Recall that $\dim(V_{x,d}) =(d^2+3d)/2 -x$. 
By \cite[Theorem 5.1]{cc2} 
a general $T_A\in |\Ii_{2A}(d)|$ is nodal 
and {the points of} $A$ are the only singular points of $T_A$. 
By varying $A\in S(\PP^2,x)$ we see that for a general $C\in V_{x,d}$ 
the set $\mathrm{Sing}(C)$ is a general element of $S(\PP^2,x)$. 
\end{proof}

\begin{notation}
For any $d$ and $g$ such that $d>0$ and $1-d\le g\le (d-1)(d-2)/2$, let $V(d,g)$ be the {closure of the }set of all degree $d$ curves $C\subset \PP^2$ without multiple components and of geometric genus $g$.
Let $V(d,g)^{\mathrm{irr}}$ denote the set of all irreducible $C\in V(d,g)$. 
\end{notation}

\begin{remark}\label{rmk-severi-open}
Of course, $V(d,g)^{\mathrm{irr}}=\emptyset$ if $g<0$. Harris proved that for all $0\le g\le (d-2)(d-1)/2$ 
the set $V(d,g)^{\mathrm{irr}}$ is irreducible \cite[$(\ast)$ in the Introduction]{harris}, 
hence {the Severi variety} $V_{(d-1)(d-2)/2-g,d}$ {(see Notation \ref{severi})} is an open dense subset of it. 
\end{remark}

The crucial result in this section is the following: 
\begin{proposition}\label{qfin} 
Fix a positive integer $d\ge 7$ and let
%
%%\quad(i)
\begin{eqnarray*}
x &=&  \frac{1}{3}\binom{d+2}{2}-1=\frac{(d+4)(d-1)}{6}, \quad\mbox{ if }d\equiv 1,2\pmod{3};\\
x &=& \left\lfloor \frac{1}{3}\binom{d+2}{2}\right\rfloor=\frac{d(d+3)}{6}, \quad\mbox{  if }d\equiv 0\pmod{3}.
\end{eqnarray*}
Then we have $\dim\TT(2,d;x)\le 2x-2$.

%\smallskip

%\quad(ii) If $d\equiv 0\pmod{3}$ 
%$$y = \left\lceil \frac{1}{3}\binom{d+2}{2}\right\rceil=\frac{d(d+3)}{6}+1,$$
%then we have $\dim\TT(2,d;y)\le 2y-2$.
%%%%%%%% has an irreducible component of dimension $2y-1$ if and only if $(d,y)=(6,10)$.}
\end{proposition}

\begin{proof} 
Assume by contradiction that there exists a component   $K$ of $\TT(2,d;x)$ of dimension $2x-1$.
By Remark \ref{rmk:eta}, we have $\eta(K)=x-1$.

For any $S\in K$, by definition of Terracini locus, we have 
${\epsilon:=}h^1(\Ii_{2S}(d))>0$ hence 
\begin{equation}\label{formula-ini}h^0(\Ii_{2S}(d))= \binom{d+2}2-3x+\epsilon\ge2+\frac{d^2+3d}2-3x,
\end{equation}
which implies $\dim |\Ii_{2S}(d)| \ge 1+\frac{d^2+3d}2-3x$. 

Consider the set 
$$\Gamma_K=\{(S,C):S\in K, C\in|\Ii_{2S}(d)|\}\subset S(\PP^2,x)\times |\Oo_{\PP^2}(d)|$$
 and let  
$A_K\subseteq \Gamma_K$ be a {dominant irreducible component of $\pi_1^{-1}(K)$}. 
%such that  the projection ${\pi_1}: A_K\to K$ is dominant. {
Note that $A_K$ is non-empty because any element of $K$ is in the Terracini locus.

Let ${\pi_2}: A_K\to |\Oo_{\PP^2}(d)|$ denote the restriction to $A_K$ 
of the projection to the second factor. Since each fiber of ${\pi_1}$ has dimension {greater than or equal to}
$1+(d^2+3d)/2 -3x$ and $\dim K=2x-1$, we have 
\begin{equation}\label{dimAK}\dim A_K\ge \frac{d^2+3d}{2}-x =\dim V_{x,d}.\end{equation}

{Take a general} $(S,C)\in A_K$. Since $\eta(K)=x-1$, then there is a general $S'\in S(\PP^2, x-1)$ {contained in $S$.}
Fix such $S'$.

{In the sequel of the proof (steps (b) and (c)), we will use the following two claims.}

\medskip

\quad {\bf Claim 1:} By the Alexander-Hirschowitz Theorem \ref{AHthm}, we have $h^0(\Ii_{2S'}(d-1)) =0$.
{In particular $\deg(T) \ge d$ for all curves $T\supset 2S'$.}

\medskip
Claim 1 implies the following:
\medskip

\quad {\bf Claim 2:} 
{Every irreducible component $D$ of $C$ contains at least one point of $S'$.}
%{Indeed, from Claim 1 we easily deduce that $h^0(\Ii_{2S'}(d))\le h^0(\Ii_{2S'\cap D}(d))$.}
%\rosso{Indeed if  $D$ contains no point of $S'$, then $2S'$ would be contained in a curve of degree less than the degree of $C$, contradicting Claim 1.}

\medskip

We consider now the following three cases: $C$ irreducible and reduced, $C$ reducible and reduced, and $C$ non-reduced.

\medskip

\quad {\bf (a)} Assume first that $C$ is irreducible and reduced.
Since $C$ has no multiple components, then $\mathrm{Sing}(C)$ is finite. Thus a general fiber of ${\pi_2}$ is finite, hence $\dim {\pi_2}(A_K) =\dim A_K$. 
Therefore by \eqref{dimAK} we have $\dim \pi_2(A_K)\ge \dim V_{x,d}$. 
Hence ${\pi_2}(A_K)$ contains a non-empty open subset of $V_{x,d}$, {by Remark \ref{rmk-severi-open}}.
{Since $x <(d+2)(d+1)/6$,} by Lemma \ref{Lem}, 
we get $\dim\TT(2,d;x)=2x$, which is false. 

\medskip

\quad {\bf (b)} %{ricontrollare} 
Assume now $C$ reduced and reducible. 
%We know that $\dim A_K\ge\dim V_{x,d}$, by \eqref{dimAK}. 
Since $C$ has no multiple components, we have $$\dim {\pi_2}(A_K) =\dim A_K{=\binom{d+2}{2} -x-1 +\epsilon},$$ where 
$\epsilon:= h^1(\Ii_{2S}(d)) >0$.

Write $C = C_1\cup \cdots \cup C_s$, $s\ge 2$,
with $C_i$ an {irreducible} curve of degree $d_i$ and geometric genus $g_i$, {for all $1\le i\le s$}.
{
Restricting to a non-empty Zariski open subset $U_K$ of $K$, we may assume that the integers $s$, $d_i$ and $g_i$ are the same for all $C\in U_K$.} We have $d_1+\cdots +d_s=d$.

By \cite[{Introduction}]{harris}, the family 
{$\Aa_i$}
of all 
irreducible plane curves of degree $d_i$ and geometric genus $g_i$ is irreducible of dimension 
$$\dim \Aa_i =(d_i^2+3d_i)/2 -(d_i-1)(d_i-2)/2 +g_i = 3d_i -1+g_i.$$ 

{Obviously, $\dim \pi_2(A_K)$ is at most 
$\sum_{i=1}^{s} \dim \Aa_i=\sum_{i=1}^{s} (3d_i-1+g_i) = 3d-s+g_1+\cdots +g_s$.}

 By \cite[{Theorem 1.2}]{nob}, 
we have $$g_1+\cdots +g_s \le \frac{(d-1)(d-2)}{2} -x+1,$$
{hence, we get
$$\dim \pi_2(A_K)=\binom{d+2}{2} -x-1+\epsilon \le 3d-s+\frac{(d-1)(d-2)}{2}-x+1
$$
from which we have
$s\le 2-\epsilon\le 1$, which contradicts our assumption.}

\medskip

\quad {\bf (c)} Now assume that $C$ has at least one multiple component. 

{
First, note that each irreducible component of $C_{\red}$ contains at least one point
of $S'$, by Claim 2. Moreover,
by Claims 1 and 2,
each irreducible component of $C$ has multiplicity at most $2$.}

{Hence, we can write $C =A_1+2A_2$, 
where $A_i$, for $1\le i\le 2$, is the union of the irreducible components of $C_{\red}$ appearing with multiplicity exactly $i$ in $C$.
Set $a_i:= \deg(A_i)$, so that $a_1+2a_2=d$, and
$B_2:= S'\cap A_2$,  $B_1:= S'\setminus B_2$ and $b_i:= \#B_i$. 
Note that $B_1\cap B_2=\emptyset$ and $b_1+b_2=x-1$.}

\medskip

{
\quad {\bf Claim 3:} 
Note that $a_1\neq 1$. Indeed if $a_1=1$, then we would have $b_1=0$ since the line $A_1$ cannot have singular points. Hence $2A_2$ would be in $|\Ii_{S'}(d-1)|$ which is empty by Claim 1.
Analogously, one can see that if $a_1=2$, then $b_1=1$.}

\medskip 

{Since $B_2\subset A_2$ and $B_2\subseteq S'$ is general in $S(\PP^2,b_2)$, we have
\begin{equation}\label{stima b2}
h^0(\Ii_{B_2}(a_2))=\binom{a_2+2}{2} - b_2>0.\end{equation}
}

{Now, let $\Bb$ denote the base locus of $|\Ii_S(d)|$.  
Bertini's Theorem implies that $C$ is smooth outside $\Bb$, hence $A_2\subseteq \Bb $ and 
 the residual exact sequence of $2A_2$ 
gives
$$h^0(\Ii_{2S}(d)) = h^0(\Ii_{2B_1}(d-2a_2)).$$
Recall from \eqref{formula-ini}, that $h^0(\Ii_{2S}(d)) =\binom{d+2}{2}-3x+\epsilon$, with $\epsilon:=h^1(\Ii_{2S}(d)) >0$. 
Recall also that $d-2a_2=a_1$.}

{
Since $B_1\subseteq S'$ is general in $S(\PP^2,b_1)$, by the Alexander-Hirschowitz Theorem \ref{AHthm} we have
$h^0(\Ii_{2B_1}(a_1))=\binom{a_1+2}{2}-3b_1,$ except if either $a_1=2$ and $b_1=2$, or $a_1=4$ and $b_1=5$.}

\medskip

{By Claim 3, the case $(a_1,b_1)=(2,2)$ is impossible. So assume first that $(a_1,b_1)\neq(4,5)$.}
{
Thus, we have
$$\binom{d+2}{2}-3x+\epsilon = \binom{a_1+2}{2}-3b_1
$$
which gives, since $x-1=b_1+b_2$,
$$\epsilon=3b_2+3-a_2(2a_1+2a_2+3),$$
}
{
and using \eqref{stima b2} we have
$$1\le \epsilon<3\binom{a_2+2}{2}+3-a_2(2a_1+2a_2+3)=\frac12\left(12-a_2(4a_1+a_2-3)\right).$$}

{Now, if $\frac12\left(12-a_2(4a_1+a_2-3)\right)\le 1$, we would have a contradiction. 
Hence we get
$\left(12-a_2(4a_1+a_2-3)\right)\ge 4$, that is
\begin{equation}\label{condizione}
a_2(4a_1+a_2-3)\le8.
\end{equation}
}

{Now, {we have} that the only pair $(a_1,a_2)$ satisfying  \eqref{condizione} and such that $d=2a_2+a_1\ge 7$, {$a_2>0$}, $a_1\neq1$ (by Claim 3) is $(a_1,a_2)=(0,4)$. 
Indeed if $a_1=0$, then \eqref{condizione} implies $a_2=4$.
If $a_1=2$ (hence $d$ is even and $d\ge 8$), we get $a_2=(d-2)/2\ge3$, contradicting \eqref{condizione}. If $a_1\ge 3$, since $a_2\ge 1$, \eqref{condizione} fails.

In this case we have
 $d=8$, $x=14$, $S$ is contained in a double plane quartic $C=2A_2$. We have $h^1(\Ii_{2S}(8)) >0$ if and only if $h^0(\Ii_{2S}(8)) \ge 4$. 
 % Since $2A_2$ is a general element of $|\Ii_{2S}(8)|$, then
%$h^0(\Ii_S(4)) \ge 4$. 
We have $h^1(\Ii_{2S}(8)) >0$ if and only if $h^0(\Ii_{2S}(8)) \ge 4$.
Since $2A_2$ is a general element of $|\Ii_{2S}(8)|$, then
$h^0(\Ii_S(4))=h^0(\Ii_{2S}(8)) \ge 4$. 

{Since $\eta(K)=x-1$, then there is a general $S'\in S(\PP^2,13)$ which is contained in $S$.} Hence $h^0(\Ii_{S'}(4))\ge h^0(\Ii_{S}(4)) \ge 4$. On the other hand, since $S'$ is general in $S(\PP^2,13)$, we have $h^0(\Ii_{S'}(4)) =2$, a contradiction.}

Finally, if $(a_1,b_1)=(4,5)$, we can repeat verbatim the same argument of the general case and we get again a contradiction. This completes the proof that $\dim(\TT(2,d;x))\le 2x-2$.
%\medskip
%{The second part of the statement on $\TT(2,d;y)$ is proved exactly in the same way.}
\end{proof}

{
\begin{lemma}\label{uniqueness}
Fix an integer $d\ge 7$ such that $d\equiv 1,2\pmod{3}$ and set $x:= (d+2)(d+1)/6$. Then the irreducible component described in Example \ref{e1} is the unique irreducible codimension one component of $\TT(2,d;x)$, except if $d=8$.
\end{lemma}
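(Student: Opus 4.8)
The plan is to argue by contradiction, following very closely the structure of the proof of Proposition~\ref{qfin}. For $d\ge 7$ the Alexander--Hirschowitz Theorem gives $\dim\TT(2,d;x)\le 2x-1$, so any codimension one component distinct from $\overline{\phi(V_{x,d})}$ would be a genuine second such component; assume $K$ is one. By Remark~\ref{rmk:eta} we have $\eta(K)=x-1$, so a general $S\in K$ contains a general $S'\in S(\PP^2,x-1)$. Since here $\binom{d+2}{2}=3x$, for every $S\in K$ one has $h^0(\Ii_{2S}(d))=h^1(\Ii_{2S}(d))=:\epsilon\ge 1$, while $h^0(\Ii_{2S'}(d))=3$ and $h^0(\Ii_{2S'}(d-1))=0$ by Alexander--Hirschowitz; in particular $\epsilon\le 3$, and Claims 1, 2, 3 of the proof of Proposition~\ref{qfin} remain valid. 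As there, I would form the incidence variety $\Gamma_K$ and an irreducible dominating family $A_K\subseteq\Gamma_K$, taken to be the closure of the $\PP^{\epsilon-1}$-bundle over the open locus of $K$ where $h^0(\Ii_{2S}(d))$ equals its generic value $\epsilon$, so that $\dim A_K=2x-2+\epsilon$; then fix a general pair $(S,C)\in A_K$ with its projections $\pi_1\colon A_K\to K$ and $\pi_2\colon A_K\to|\Oo_{\PP^2}(d)|$, and split into the three cases according to whether $C$ is irreducible and reduced, reduced and reducible, or non-reduced.

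If $C$ is irreducible and reduced, then $\mathrm{Sing}(C)$ is finite and contains $S$, so $\pi_2$ has finite fibres and $\dim\pi_2(A_K)=2x-2+\epsilon$; moreover $C$ has at least $x$ singular points, hence geometric genus $\le g_0:=\binom{d-1}{2}-x$, so $\pi_2(A_K)\subseteq\overline{V(d,g')^{\mathrm{irr}}}$ for some $g'\le g_0$, a variety of dimension $3d-1+g'\le 3d-1+g_0=2x-1=\dim V_{x,d}$. This forces $\epsilon=1$, $g'=g_0$, and $\pi_2(A_K)$ dense in $\overline{V(d,g_0)^{\mathrm{irr}}}$; by Remark~\ref{rmk-severi-open} a general member is then a nodal curve with exactly $x$ nodes, so $S=\mathrm{Sing}(C)=\phi(C)\in\phi(V_{x,d})$, whence $K\subseteq\overline{\phi(V_{x,d})}$ and, the two being irreducible of the same dimension, $K=\overline{\phi(V_{x,d})}$, a contradiction. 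If $C=C_1\cup\cdots\cup C_s$ is reduced with $s\ge 2$ irreducible components of degrees $d_i$ and geometric genera $g_i$, then again $\pi_2$ has finite fibres and, by \cite{harris}, $\dim\pi_2(A_K)\le\sum_i(3d_i-1+g_i)=3d-s+\sum_i g_i$; since $C$ has at least $x$ singular points, $\sum_i g_i\le\binom{d-1}{2}-x$ (cf.\ \cite{nob}), and using $\binom{d+2}{2}=3x$ this yields $2x-2+\epsilon=\dim A_K=\dim\pi_2(A_K)\le 2x-s$, i.e.\ $s+\epsilon\le 2$, contradicting $s\ge 2$.

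The remaining case, in which $C$ has a multiple component, is the heart of the argument and I expect it to be the main obstacle. Exactly as in Proposition~\ref{qfin}(c), Claims 1 and 2 force every component of $C$ to have multiplicity at most $2$, so $C=A_1+2A_2$ with $a_i:=\deg A_i$ and $a_1+2a_2=d$; setting $B_2:=S'\cap A_2$, $B_1:=S'\setminus B_2$, $b_i:=\#B_i$, the residual sequence of $2A_2$ (which lies in the base locus of $|\Ii_{2S}(d)|$ by Bertini) gives $\epsilon=h^0(\Ii_{2B_1}(a_1))=\binom{a_1+2}{2}-3b_1$, where Claim 3 excludes the exceptional pair $(a_1,b_1)=(2,2)$ and the exceptional pair $(a_1,b_1)=(4,5)$ is dispatched exactly as in Proposition~\ref{qfin}(c), leading to a contradiction for $d\ge 7$; meanwhile $h^0(\Ii_{B_2}(a_2))=\binom{a_2+2}{2}-b_2>0$ since $A_2$ passes through $B_2$. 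Substituting $x=\frac13\binom{d+2}{2}$, $b_1+b_2=x-1$ and $\epsilon\ge 1$ into these relations reduces, after the same computation as in Proposition~\ref{qfin}(c), to the inequality $a_2(4a_1+a_2-3)\le 8$, whose only solution with $d=a_1+2a_2\ge 7$ and $a_1\neq 1$ is $(a_1,a_2)=(0,4)$, that is $d=8$, $x=15$, with $C=2A_2$ the double of a plane quartic and $S$ contained in the unique quartic through $S'$. Hence for $d\neq 8$ this case too is empty, and together with the first two cases this proves that $\overline{\phi(V_{x,d})}$ is the unique codimension one component of $\TT(2,d;x)$. For $d=8$ the case does not yield a contradiction: it reproduces precisely the component of Example~\ref{ex-nuovo}, which is distinct from $\overline{\phi(V_{15,8})}$, and this is the source of the stated exception. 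The delicate points, and the real obstacle, are all in this third case: one must control the residual computation, the multiplicities, and the position of the extra point $p=S\setminus S'$ relative to $A_1$ and $A_2$ precisely enough that the final numerical inequality has $(a_1,a_2)=(0,4)$ as its only loophole, and then check directly that for $d=8$ this loophole is a genuine second component rather than an artefact.
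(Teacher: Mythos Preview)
Your proposal is correct and follows essentially the same route as the paper: the paper's proof of this lemma literally says ``the case $C$ reduced and reducible or with multiple components are excluded as in the proof of Proposition~\ref{qfin}, the only difference being that the exception $(a_1,a_2)=(0,4)$ gives $d=8$, $x=15$,'' and for $C$ irreducible it distinguishes nodal (giving $K=\overline{\phi(V_{x,d})}$) from non-nodal (genus too small, dimension contradiction via \cite{harris})---which is exactly your case (a). Your write-up is simply a more explicit unpacking of that same argument, including the observation that the surviving $(0,4)$ case is precisely the second component of Example~\ref{ex-nuovo}.
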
}
\begin{proof}
{Take an irreducible component $T$ of $\TT(2,d;x)$ dimension $2x-1$ and take a general $S\in T$. By assumption $h^0(\Ii_{2S}(d)) \ne 0$. Take a general $D\in |\Ii_{2S}(d)|$.
Since $\eta(T)=x-1$ and $S$ is general in $T$ each $S'\subset S$ with $\#S'=x-1$ has the property that both $S'$ and $2S'$ have the Hilbert function of a general element.
If $D$ is irreducible and nodal, then $T$ is the component $\overline{\phi(V_{x,d})}$,  described in Example \ref{e1}.}

{Now assume that $D$ is irreducible and not nodal, hence $D$ has geometric genus strictly less than $(d-1)(d-2)/2 -x$. By \cite[{Introduction}]{harris} $\dim T < V_{x,d}$, a contradiction. The case $C$ reduced and reducible or with a multiple components are excluded as in the proof of {Proposition \ref{qfin}}. The only difference is that the exception
$(a_1,a_2)=(0,4)$ gives in this case $d=8$ and $x=15$ which is described in Example \ref{ex-nuovo}.}
\end{proof}

We are finally ready to prove our main result.

\smallskip

\noindent
\textit{Proof of Theorem \ref{finale2}.} 
%The existence of an irreducible component of $\TT(2,d;x)$ of dimension $2x-1$ is recalled in Remark \ref{casipiccoli}
%for $(d,x)=(5,6),(6,9)$ and proved  for $(d,x)=(4,4)$ in
%Example \ref{ex1}, for $(d,x)=(5,7)$ in Example \ref{caso(5,7)}, for $(d,x)=(6,10)$
%in
%Example \ref{ex2}, 
%for 
%$d\equiv 1,2\pmod{3}$, $d\ge7$ and $x= (d+2)(d+1)/6$ in Example \ref{e1}.}
The existence of an irreducible component of $\TT(2,d;x)$ of dimension $2x-1$ is proved:
\begin{itemize}
\item in
Example \ref{ex1} for $(d,x)=(4,4)$, 
\item in Proposition \ref{bo1} for $(d,x)=(4,6)$,  
\item in Remark \ref{casipiccoli}
for $(d,x)=(5,6),(6,9)$, 
\item in Example \ref{caso(5,7)}
for $(d,x)=(5,7)$ , 
\item in
Example \ref{ex2} 
for $(d,x)=(6,10)$,
\item
in Example \ref{e1} for 
$d\equiv 1,2\pmod{3}$, $d\ge7$ and $x = (d+2)(d+1)/6$.
\end{itemize}

Let $\rho=\rho(2,d)$ as in Notation \ref{notation rho}.
The other implication follows 
\begin{itemize}
\item for $d=4$ and $x\ge 7$ from Remark \ref{rem-deg-4},
(recall that the case $(d,x)=(4,5)$ is known by the Alexander-Hirschowitz Theorem \ref{AHthm}), 
\item
for $d\ge5$ and $x\ge \rho+1$, from  Theorem \ref{000u2},  
\item
for $d\equiv 0\pmod{3}$, 
$d\ge 9$ and
$x = \rho$ from
Lemma \ref{last}, 
\item {for $d=5$ and $x=5$, from Remark \ref{casipiccoli}},
\item {for $d=6$ and $x=8$, from Proposition \ref{caso 6,8} and for $d=6$ and $5\le x\le 7$, from Proposition \ref{prop sigma}},
\item for $d\ge 7$ and $x<\rho$, from Proposition \ref{qfin}.
\end{itemize}

In the case $(d,x)=(8,15)$ one component is described in Example \ref{ex-nuovo}, the other component is given by $\overline{\phi(V_{15,8})}$, see Example \ref{e1}.
{The uniqueness in the other cases is proved in Lemma \ref{uniqueness}.}
\qed

\medskip

\end{document}